\def\ST{\songti\rm\relax}
\def\R{{\mathbb R}}
\def\ST{{\rm s.t.}}
\begin{document}

\title{Polyak's convexity theorem, Yuan's lemma and S-lemma: extensions and applications\thanks{This research was supported by
	the Beijing Natural Science Foundation under grant Z180005, and
	the National Natural Science Foundation of China under grants 11822103 and 11771056.}}


\titlerunning{On Polyak's theorem, Yuan's lemma and S-lemma}        

\author{Mengmeng Song        \and
        Yong Xia 
}

\institute{Mengmeng Song \and Yong Xia (corresponding author) \at LMIB of the Ministry of Education, School of Mathematical Sciences, Beihang University, Beijing 100191, People's Republic of China.\\
	\email{\{songmengmeng,\ yxia\}@buaa.edu.cn}
	  }     

\date{Received: date / Accepted: date}
\maketitle

\begin{abstract}
We extend Polyak's theorem on the convexity of joint numerical range from three to any number of quadratic forms on condition that they can be generated by three quadratic forms with a positive definite linear combination. Our new result covers the fundamental Dines's theorem.
As applications, we further extend Yuan's lemma and S-lemma, respectively. Our extended Yuan's lemma is used to build a more generalized assumption than that of Haeser (J. Optim. Theory Appl. 174(3): 641-649, 2017), under which
the standard second-order necessary optimality condition holds at local minimizer. The extended S-lemma reveals strong duality of homogeneous quadratic optimization problem with two bilateral quadratic constraints.
\keywords{ Quadratic form \and Polyak's theorem \and Yuan's lemma \and S-lemma \and Second-order optimality condition \and Quadratic optimization \and Strong duality}
\subclass{90C30, 90C46, 90C26}
\end{abstract}

\section{Introduction}

The convexity result on the joint numerical range of two quadratic forms in $\R^n$ dates back to Dines \cite{Dines41} in 1941. As a next milepost,
Brickman \cite{Brickman61} proves in 1961 that the joint numerical range of two quadratic forms over the unit sphere of  $\R^n\ (n\ge 3)$ is convex. In 1998, based on Brickman's result,  Polyak \cite{Polyak98} proves the convexity and closeness of the joint numerical range of three quadratic forms in $\R^n\ (n\ge 3)$, provided that there exists a positive definite linear combination of the three quadratic forms. Generally, the convexity of the joint numerical range of $m$ ($m>3$) quadratic forms remains unknown.
As is observed in \cite{Polyak98}, the existence of a linear positive definite combination of quadratic forms no longer guarantees the convexity for arbitrary $m> 3$. To our knowledge, the only known convexity result is that the Hessian matices of the $m$ quadratic forms can be simultaneously diagonalized in $\R^n$ \cite{fradkov1973}.

Convexity of the joint numerical range of quadratic forms has many applications, especially in establishing classical alternative results on  two quadratic forms including Yuan's lemma \cite{Yuan90} and  S-lemma \cite{polik2007,Yakubovich71}.
Both lemmas can be regarded as nonconvex extensions of the celebrated Farkas lemma and then proved by Dines's theorem \cite{Dines41}.
Yuan's lemma provides an equivalent characterization on the nonnegativity of  the maximum of two quadratic forms. It has applications in characterizing optimality conditions \cite{Baccari04} and minimax optimization \cite{haeser17}.  As an extension, Chen and Yuan \cite{chen1999} analyze  the nonnegativity of  the maximum of three quadratic forms and establish  a  necessary condition.
Haeser \cite{haeser17} extends Yuan's lemma to more than two quadratic forms, provided that the rank of the involved Hessian matrices is at most $2$.
The motivation inherits  from the extended S-lemma under the same assumption, see \cite[Proposition 3.5]{polik2007}. S-lemma has essential applications in control theory and robust optimization \cite{polik2007}.
As an application of Polyak's convexity theorem \cite{Polyak98}, Polyak himself generalizes in the same paper a novel S-lemma on three quadratic forms with a positive definite linear combination, see \cite[Theorem 4.1]{Polyak98}.

Our motivation in this paper dates back to Lao Tzu, an ancient Chinese philosopher, who said
\begin{center}
{\it
the three begets all things of the world.
}
\end{center}

We first extend Polyak's convexity theorem to arbitrary $m\ (\ge1)\ $ quadratic forms, provided that they can be generated by three quadratic forms with a positive definite linear combination.  It  covers not only Polyak's convexity theorem but also the fundamental Dines's theorem.

One application is to further extend Yuan's lemma to $m\ (\ge1)\ $ quadratic forms. It generalizes the classical Yuan's lemma and the extended Yuan's lemma due to Haeser \cite{haeser17} as special cases.
Yuan's lemma plays a great role in characterizing optimality conditions in nonlinear programming.
As Mangasarian-Fromovitz constraint qualification (MFCQ) \cite{Mangasarian67} cannot guarantee that the standard second-order necessary optimality condition holds at the local minimizer \cite{Anitescu00}, more assumptions have to be introduced.
Based on the classical Yuan's lemma, Baccari and Trad \cite{Baccari04} give such an assumption that the set of Lagrange multiplier is a bounded line segment.
Haeser \cite{haeser17} uses an extension of Yuan's lemma to generalize
Baccari and Trad's assumption to the case that the Hessian matrices of the Lagrangian, evaluated at the vertices of the Lagrange multiplier set, form a matrix set with rank at most two. Based on our
further extended Yuan's lemma, we can relax the rank assumption on the Hessian matrices set from two to three, provided the matrices have a positive definite linear combination. Our result generalizes not only Baccari and Trad's assumption  \cite{Baccari04} but also Haeser's assumption \cite{haeser17}.

The second application of the extended Polyak's convexity theorem is to further extend the homogeneous S-lemma and its extension \cite[Proposition 3.5]{polik2007},  and Polyak's S-lemma \cite[Theorem 4.1]{Polyak98}.
As an application, we establish the strong duality of the homogeneous quadratic optimization with two homogeneous quadratic  bilateral constraints, which includes the generalized trust region subproblem with interval bounds and the two-side regularized total least squares problem as special cases.

The remainder of this  paper is arranged as follows. Section 2 extends Polyak's convexity theorem to $m\ (\ge1)$ quadratic forms on condition that their Hessian matrices can be generated by three matrices with a positive definite linear combination. In Section 3, Yuan's lemma and its extension are further extended. We then establish a more generalized assumption so that the standard second-order necessary optimality condition holds at local minimizer. In  Section 4, we further extend the homogeneous S-lemma and its extension. As an application, we establish strong duality of the homogeneous quadratic programming with two quadratic bilateral constraints, which includes the generalized trust region subproblem with interval bounds and the two-side regularized total least squares problem as special cases. We make a conclusion in Section 5.

{\bf Notations.}
Let $\R_+^m\ (\R_-^m)$ and $\mathcal{S}^n$ be the nonnegative (nonpositive) orthant in $\R^m$ and the set of symmetric matrices in $\R^{n\times n}$, respectively. Let $\Delta_m=\{t\in\R_+^m:\sum_{i=1}^m t_i=1\}$ be the $(m-1)$-dimensional simplex.
Denote by $I$ the identity matrix of proper dimension. Let $A\succ(\succeq)$ denotes that $A$ is positive (semi-) definite.
For any set $K\subset\R^n$, let ${\rm span}(K)$ be the subspace spanned by $K$ and  ${\dim}(K)$ be the dimension of ${\rm span}(K)$. Denote by ${\rm int}K$ and ${\rm ri}K$ the interior and relative interior of $K$, respectively.
$K\subset\R^n$ is a cone, if for any $x\in K$, $\lambda x\in K$ holds for all $\lambda\ge 0$. The cone $K\subset\R^n$ is  acute if there is no $x\neq 0$ such that $x, -x\in K$.
For any twice continuously differentiable function $g: \R^n\rightarrow\R$ and $x\in\R^n$, let $\nabla g(x)$ and $\nabla^2g(x)$ be the gradient and the Hessian matrix of $g$ at $x$, respectively. Denote by $v(\cdot)$ the optimal value of the problem $(\cdot)$.
%

\section{Extended Polyak's theorem}
The following Polyak's theorem \cite[Theorem 2.1]{Polyak98} reveals the convexity of  the joint numerical range of three quadratic forms with a positive definite linear combination.

\begin{theorem}[Polyak's theorem]\label{le:ployak}
Let $n\ge 3$ and $A_1, A_2, A_3\in\mathcal{S}^n$. Suppose that there exists $s\in\R^3$ such that $s_1A_1+s_2A_2+s_3A_3\succ 0$. Then, the joint numerical range $\{(x^TA_1x, x^TA_2x,x^TA_3x):x\in\R^n\}$ is an acute closed convex cone.
\end{theorem}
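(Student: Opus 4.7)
The plan is to follow the classical route that reduces the three-form convexity statement to Brickman's theorem on the convexity of the joint numerical range of two quadratic forms on the unit sphere in $\R^n$ for $n\ge 3$. Denote $F=\{(x^TA_1x, x^TA_2x, x^TA_3x):x\in\R^n\}\subset\R^3$.

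First I would dispose of the easy structural properties. The cone property is immediate: replacing $x$ by $\sqrt{\lambda}\,x$ scales every coordinate by $\lambda\ge 0$, so $\lambda F\subset F$. For acuteness, suppose $y,-y\in F$, realized by $x_1$ and $x_2$. Then
\begin{equation*}
s^Ty = x_1^T\!\left(\sum_{i=1}^{3}s_iA_i\right)\!x_1\ge 0, \qquad -s^Ty = x_2^T\!\left(\sum_{i=1}^{3}s_iA_i\right)\!x_2\ge 0,
\end{equation*}
so $s^Ty=0$, which by positive definiteness of $\sum_i s_iA_i$ forces $x_1=0$ and hence $y=0$. Consequently $F\setminus\{0\}$ sits in the open half-space $\{y\in\R^3:s^Ty>0\}$.

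Next I would do a linear change of coordinates on $\R^3$ so as to normalize $s$. Since any invertible linear map $L:\R^3\to\R^3$ turns the joint numerical range of $(A_1,A_2,A_3)$ into the joint numerical range of a new triple $(\tilde A_1,\tilde A_2,\tilde A_3)$ obtained by the same linear combinations, and preserves cones, acuteness, closedness and convexity, I may assume without loss of generality that $s=(1,0,0)$, i.e.\ $A_1\succ 0$. Now every nonzero ray in $F$ crosses the affine slice $F_1:=F\cap\{y_1=1\}$ in exactly one point, so it suffices to show that $F_1$ is a nonempty, bounded, closed convex subset of $\{y_1=1\}$; cone-ing it over the origin will then produce $F$ as a closed convex (acute) cone.

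The key step, and the main obstacle, is the convexity of $F_1$. I would parametrize $\{x:x^TA_1x=1\}$ through the ellipsoid substitution $x=A_1^{-1/2}u$ with $\|u\|=1$, giving
\begin{equation*}
F_1 = \bigl\{\bigl(1,\, u^TB_2u,\, u^TB_3u\bigr) : u\in\R^n,\ \|u\|=1\bigr\},\qquad B_i:=A_1^{-1/2}A_iA_1^{-1/2}.
\end{equation*}
The projection of $F_1$ onto the last two coordinates is precisely the joint numerical range of the two symmetric matrices $B_2,B_3$ on the unit sphere of $\R^n$; Brickman's theorem, whose hypothesis $n\ge 3$ is exactly what the present theorem assumes, says this set is convex (and compact as the continuous image of the sphere). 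Convexity and closedness of $F_1$ follow at once. This is the delicate point of the argument, since convexity of joint numerical ranges of quadratic forms generally fails without Brickman's dimensional hypothesis.

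Finally I would assemble the conclusion. The cone over the bounded compact convex set $F_1$ together with the origin is a closed convex cone, which equals $F$ by the slice argument above, and acuteness was already shown. Hence $F$ is an acute closed convex cone, completing the proof.
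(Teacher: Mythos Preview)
The paper does not actually prove this statement: Theorem~\ref{le:ployak} is quoted from \cite{Polyak98} and used as a black box in the proof of Theorem~\ref{th:ex2}, so there is no ``paper's own proof'' to compare against.

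That said, your argument is the standard one and is essentially Polyak's original proof: normalize so that $A_1\succ 0$, slice the cone at $\{y_1=1\}$, and identify the slice with the joint numerical range of two quadratic forms on the unit sphere, to which Brickman's theorem applies for $n\ge 3$. The acuteness and cone parts are handled correctly, and your closing step---that the cone over a compact convex set lying in $\{y_1=1\}$, together with the origin, is closed and convex---is valid because $F_1$ is bounded away from the origin. So your proposal is correct; it simply reproduces the classical proof that the paper takes for granted.
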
	

In this section, we study the joint numerical range of $m\ (m\ge1)$ matrices.
We notice that Polyak's theorem fails to extend to the case $m\ge 4$ even when the $m$ matrices have a positive definite linear combination, see the counterexample \cite[Example 3.7]{Polyak98}.

We begin with  the definition of matrices generation.
\begin{definition}\label{de:ge}
The set of matrices $\mathcal{B}=\{B_i: i=1,\cdots, m\}$ is generated by $\mathcal{A}=\{A_j:j=1,\cdots,k\}$, if there exist $r_{ij}\in\R~ (i=1,\cdots,m,\ j=1,\cdots,k)$ such that $B_i=\sum_{j=1}^k r_{ij} A_j$ for $i=1,\cdots,m$.
\end{definition}

Definition \ref{de:ge} coincides with  ``linear expression'' in linear algebra, if we regard $\mathcal{S}^n$ as a linear space of dimension $n(n+1)/2$. We define the rank of  $\mathcal{B}$ as
\[
{\rm rank} (\mathcal{B})=
\min\{k:~\mathcal{B} {\rm\ can\ be\ generated\ by\ } \{B_{i_1},\cdots,B_{i_k}\} \},
\]
which is equal to the dimension of the subspace generated by $\mathcal{B}$. Moreover, not only the rank of $\mathcal{B}$ but also the corresponding index set $\{ i_1,\cdots, i_k\}$ can be efficiently obtained by Gaussian elimination. $\mathcal{A}$ is called a basis of $\mathcal{B}$, if $\mathcal{B}$ can be generated by $\mathcal{A}$, and $\mathcal{A}$ is a set of ${\rm rank} (\mathcal{B})$ matrices.

%
%

Our main result is to show that the joint numerical range of $m$ quadratic forms is convex on condition that the $m$ Hessian matrices can be generated by three matrices with a positive definite linear combination.
\begin{theorem}\label{th:ex2}
Suppose that $n\ge 3,~m\ge 1$,  $\{B_i: i=1,\cdots, m\}\subset\mathcal{S}^{n}$ can be generated by $\{A_1, A_2, A_3\}\subset\mathcal{S}^{n}$, and there exists a vector $s\in\R^3$ such that $s_1A_1+s_2A_2+s_3A_3\succ0$. Then, \begin{equation}
\{(x^TB_1x, x^TB_2x,  \cdots,x^TB_mx):~x\in\R^n\} \label{Pset}
\end{equation}
 is a  convex cone.
\end{theorem}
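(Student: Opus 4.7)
The plan is to reduce the statement directly to Polyak's theorem (Theorem \ref{le:ployak}) via a linear change of coordinates on the target space. By the generation hypothesis (Definition \ref{de:ge}), there exist scalars $r_{ij}$ ($i=1,\ldots,m$, $j=1,2,3$) such that $B_i=\sum_{j=1}^{3}r_{ij}A_j$ for every $i$. Assembling them into $R=(r_{ij})\in\R^{m\times 3}$ and writing $\Phi(x)=(x^TA_1x,\, x^TA_2x,\, x^TA_3x)$, we have
\begin{equation*}
(x^TB_1x,\ldots,x^TB_mx) = R\,\Phi(x), \qquad \forall\, x\in\R^n,
\end{equation*}
viewing both sides as column vectors. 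Hence the set (\ref{Pset}) coincides with the linear image $R\mathcal{K}$, where $\mathcal{K}:=\{\Phi(x):x\in\R^n\}$ is the three-dimensional joint numerical range of $A_1,A_2,A_3$.

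Since $n\ge 3$ and $A_1,A_2,A_3$ admit a positive definite linear combination by hypothesis, Theorem \ref{le:ployak} applies and guarantees that $\mathcal{K}$ is a convex cone (in fact acute and closed). It is then elementary that the image of any convex cone under a linear map is a convex cone: for $y_1,y_2\in\mathcal{K}$ and $\lambda\in[0,1]$, $\lambda R y_1+(1-\lambda)R y_2 = R(\lambda y_1+(1-\lambda) y_2)\in R\mathcal{K}$ by convexity of $\mathcal{K}$; and for any $\lambda\ge 0$ and $y\in\mathcal{K}$, $\lambda R y = R(\lambda y)\in R\mathcal{K}$ since $\mathcal{K}$ is a cone. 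This delivers the conclusion.

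Because the argument is a one-step reduction, there is no substantial obstacle. The only conceptual point worth flagging is that the full strength of Polyak's theorem, namely closedness and acuteness of $\mathcal{K}$, is in general \emph{not} preserved under the linear map $R$: $R$ may collapse opposite rays of $\mathcal{K}$ and so destroy acuteness, and the image of a closed cone under a linear map need not be closed. Accordingly the extended theorem legitimately claims only the convex cone property for general $m\ge 1$, without additional structural assumptions on $R$.
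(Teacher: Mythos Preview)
Your proof is correct and follows essentially the same approach as the paper: both express the target set as the image of the Polyak joint numerical range $\{(x^TA_1x,x^TA_2x,x^TA_3x):x\in\R^n\}$ under the linear map determined by the coefficient matrix $(r_{ij})\in\R^{m\times 3}$, invoke Theorem~\ref{le:ployak} for convexity of the latter, and conclude since linear images preserve the convex cone property. Your additional paragraph noting that closedness and acuteness need not survive the linear map is also in line with the paper, which makes exactly this point in the remarks following the theorem.
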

\begin{proof}
	%
	%
Let
\begin{eqnarray*}
&&\Omega_0=\{(x^TA_1x, x^TA_2x, x^TA_3x):~x\in\R^n\},\\
&&\Omega=\{(x^TB_1x, x^TB_2x,\cdots, x^TB_mx):~x\in\R^n\}.
\end{eqnarray*}
Since $\{B_i: i=1,\cdots, m\}$ can be generated by $\{A_1, A_2, A_3\}$, there exist $r_{ij}~(i=1,\cdots,m,\ j=1,2,3)$ such that \[
B_i=r_{i1}A_1+r_{i2}A_2+r_{i3}A_3,~i=1,\cdots, m.
\]
Let $Q=(r_{ij})\in\R^{m\times3}$. Then, we have
\begin{equation}\label{eq:Omega}
	\Omega=\Omega_0Q^T.
\end{equation}
According to Theorem \ref{le:ployak}, $\Omega_0$ is a convex cone. Under linear transformation \eqref{eq:Omega}, $\Omega$ remains a convex cone.\qed
\end{proof}	

\begin{remark}\label{re:1}
The basis of $\mathcal{B}=\{B_i:~i=1,\cdots, m\}$ is not unique. However, if the positive definite linear combination assumption is satisfied for one basis,  it holds for any other basis.

Suppose the rank of $\mathcal{B}$ is $k$. Let   $\mathcal{A}=\{A_j:~j=1,\cdots,k\}$ and $\bar{\mathcal{A}}=\{\bar A_j:~j=1,\cdots,k\}$ be two bases of $\mathcal{B}$. Suppose that $\sum_{j=1}^{k}s_jA_j\succ0$ holds for some $s\in\R^k$, we show in the following that there exists $s'\in\R^k$ such that $\sum_{j=1}^{k}s'_j\bar A_j\succ0$ holds.

Since ${\rm span}(\mathcal{A})={\rm span}(\bar{\mathcal{A}})={\rm span}(\mathcal{B})$,
$\mathcal{A}$ can be generated by $\bar{\mathcal{A}}$. That is, there exist $a_{i,j} \ (i, j=1, \cdots, k)$ such that
\[A_i=\sum_{j=1}^{k}a_{i,j}\bar A_j\ {\rm for}\ i=1, \cdots, k.\]
Then, it holds that
\[
0\prec\sum_{i=1}^{k}s_iA_i=\sum_{i=1}^{k}s_i\sum_{j=1}^{k}a_{i,j}\bar A_j=\sum_{j=1}^{k}\left(\sum_{i=1}^{k}s_ia_{i,j}\right)\bar A_j.
\]
Letting $s'_j=\sum_{i=1}^{k}s_ia_{i,j}$ yields that $\sum_{j=1}^{k}s'_j\bar A_j \succ 0$.
\end{remark}

\begin{remark}\label{re:P}
Theorem \ref{th:ex2} covers Dines's theorem \cite{Dines41}, which establishes the convexity of
\[
\{(x^TA_1x, x^TA_2x):x\in\R^n\}
\]
for any  $A_1,A_2\in\mathcal{S}^n$ and $n\ge 1$. As a comparison, Theorem \ref{le:ployak} cannot completely cover Dines's theorem.

Actually, let $B_3=I\in \mathcal{S}^{n+2}$,
\[
B_1=\bmatrix A_1 & 0 & 0\\ 0 &  0 & 0\\ 0 &  0 & 0
	\endbmatrix\in \mathcal{S}^{n+2},~
B_2=\bmatrix A_2 & 0 & 0\\ 0 &  0 & 0\\ 0 &  0 & 0
	\endbmatrix\in \mathcal{S}^{n+2}.
\]
We have $0\cdot B_1+0\cdot B_2+ 1\cdot I\succ 0$.
As $\{B_1, B_2\}\subset\mathcal{S}^n$
can be generated by $\{B_1,B_2,B_3\}$, according to  Theorem \ref{th:ex2}, we have
\[
\{(y^TB_1y, y^TB_2y):~y\in\R^{n+2}\}=
\{(x^TA_1x, x^TA_2x):~x\in\R^n\}
\]
is a convex cone.
\end{remark}

The convexity in Theorem \ref{th:ex2} is inherited from Lemma \ref{le:ployak}, but the closeness and acuteness cannot be retained.
\begin{remark}
Different from Theorem \ref{le:ployak}, the set \eqref{Pset} may be not acute under the condition of Theorem \ref{th:ex2}.
The following counterexample is motivated by  \cite[Example 3]{Dines41}. Let
\[
	B_1=\bmatrix 1 & 1 & 0\\ 1 &  0 & 0\\ 0 &  0 & 0
	\endbmatrix, ~
	B_2=\bmatrix 0 & 1 & 0\\ 1 & 1 & 0\\ 0 & 0 & 0
	\endbmatrix,
\]
then the set
\[\{(x^TB_1x, x^TB_2x): ~x\in\R^3\}=\{(x_1^2+2x_1x_2,  x_2^2+2x_1x_2):~x_1, x_2\in\R\}=\R^2
\]
is certainly not acute.
\end{remark}

\begin{remark}
Different from Theorem \ref{le:ployak}, the set \eqref{Pset} may be not closed under the condition of Theorem \ref{th:ex2}.
The following counterexample is motivated by \cite{Juan05}.    Let
\[
B_1=\bmatrix 1 & 0 & 0\\ 0 &  -1 & 0\\ 0 &  0 & 0
	\endbmatrix,~
B_2=\bmatrix 2 & -1 & 0\\ -1 & 0 & 0\\ 0 & 0 & 0
	\endbmatrix,
\]
and $M=\{(x^TB_1x, x^TB_2x): ~x\in\R^3\}$.
We claim that $(1,1)\in{\rm cl}(M)$ and $(1,1)\notin M$.
It can be first verified that
\[
x^TB_2x-x^TB_1x=2x_1^2-2x_1x_2-(x_1^2-x_2^2)={(x_1-x_2)}^2\ge 0.
\]
Therefore, if $x^TB_2x=x^TB_1x$, then we have $x_1=x_2$ and hence $x^TB_2x=x^TB_1x=0$. That is,  $(1,1)\notin M$.
Define the sequence
\[
x(k)=\left(\frac{1}{\sqrt{2}}(k+\frac{1}{k}), \frac{1}{\sqrt{2}}k, 0\right)^T,~ k=1, 2, \cdots,
\]
we have
\[
\lim\limits_{k\rightarrow+\infty}x(k)^TB_ix(k)=1, ~i=1, 2.
\]
Thus, $(1,1)\in{\rm cl}(M)$ and $M$ is not closed.
\end{remark}
\section{Further extended Yuan's lemma and its application}

Based on Theorem \ref{th:ex2}, we further extend Yuan's lemma. As an application, we establish a more generalized assumption to guarantee the standard second-order necessary condition  for local minimizer.

We start from the classical Yuan's lemma \cite[Lemma 2.3]{Yuan90}.
\begin{theorem}[Yuan's lemma]\label{le:y}
For any $A_1, A_2\in\mathcal{S}^n$, the following two assertions are equivalent:
\begin{itemize}	
\item[(a)] $\forall x\in\R^n:~\max\{x^TA_1x, x^TA_2x\}\ge0$.
\item[(b)] There exists $t\in\Delta_2$ such that
	$t_1A_1+t_2A_2\succeq 0$.
\end{itemize}	
\end{theorem}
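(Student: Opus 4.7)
The implication (b) $\Rightarrow$ (a) is immediate: if $t_1A_1+t_2A_2\succeq 0$ with $t\in\Delta_2$, then for any $x$,
\[
0\le t_1 x^TA_1x+t_2 x^TA_2x\le \max\{x^TA_1x,x^TA_2x\},
\]
since $(t_1,t_2)$ is a convex combination and $\max$ dominates any convex combination. So the work is all in (a) $\Rightarrow$ (b).

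For (a) $\Rightarrow$ (b), the plan is to apply Dines's theorem (which is covered by Theorem \ref{th:ex2}, or cited directly from \cite{Dines41}) to obtain convexity of the joint numerical range
\[
D=\{(x^TA_1x,x^TA_2x):x\in\R^n\}\subset\R^2.
\]
Assumption (a) is exactly the statement that $D$ does not meet the open negative orthant $C=\{(u,v):u<0,v<0\}$. Since $D$ is convex (by Dines) and $C$ is convex and open, the classical separating hyperplane theorem yields a nonzero $t=(t_1,t_2)\in\R^2$ and $\alpha\in\R$ with $t_1u+t_2v\ge\alpha$ for all $(u,v)\in D$ and $t_1u+t_2v\le\alpha$ for all $(u,v)\in C$.

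Next I would pin down the signs. Because $D$ is a cone (taking $x=0$ gives $(0,0)\in D$, and $x\mapsto \lambda x$ scales both entries by $\lambda^2$), the separating value must satisfy $\alpha\le 0$, and moreover replacing $x$ by $\lambda x$ and letting $\lambda\to\infty$ forces the inequality $t_1u+t_2v\ge 0$ on $D$ (so we may take $\alpha=0$). On the other side, the inequality $t_1u+t_2v\le 0$ on the whole open negative orthant forces $t_1,t_2\ge 0$; since $t\ne 0$, I can normalize to $t_1+t_2=1$, i.e.\ $t\in\Delta_2$. The inequality on $D$ then reads $x^T(t_1A_1+t_2A_2)x\ge 0$ for every $x\in\R^n$, which is exactly $t_1A_1+t_2A_2\succeq 0$.

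The only delicate point, and what I expect to be the main obstacle, is the sign-pinning step: one has to verify that the separating functional can indeed be chosen with nonnegative coordinates and with $\alpha=0$, exploiting both the conical nature of $D$ and the fact that the separated set is the open negative orthant rather than an arbitrary convex set. Everything else is a direct application of Dines's convexity result together with standard separation.
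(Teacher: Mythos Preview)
Your proof is correct. The paper does not prove Yuan's lemma directly (it is cited from \cite{Yuan90}), but its proof of the extended version, Theorem~\ref{th:kkkkkk}, follows exactly your route: use the convexity of the joint range (there via Theorem~\ref{th:ex2}, which covers Dines's theorem), separate it from the open negative orthant, deduce $t\ge 0$ from the orthant side, and normalize into the simplex---so the sign-pinning step you flag is handled precisely as you outline.
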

Yuan's lemma has been extended from $\R^n$ to the first-order cones by Baccari and Trad \cite{Baccari04}, and then to the regular cones introduced by Jeyakumar et al. \cite{jeyakumar09}. For completeness, we present their definitions.
\begin{definition}
The cone $K\subset\R^n$ is a first-order cone, if there exists a subspace $S$ and a vector $d\in\R^n$ such that  $K=S+\R_+d$ holds. $K$ is a regular cone, if $K\cup-K$ is a subspace of $\R^n$.
\end{definition}
According to the above definitions, any first-order cone (including $\R^n$) is a regular cone.

Recently, Haeser extended Yuan's lemma to $m\ (\ge 2)$ quadratic forms that can be generated by two quadratic forms, see \cite[Lemma 2.2]{haeser17}.
\begin{theorem}[Extended Yuan's lemma \cite{haeser17}]\label{le:y2}
Let $A_i\in\mathcal{S}^n ~(i=1, \cdots, m)$ be such that
$A_i=\alpha_iA_1+\beta_iA_2 $ for some $(\alpha_i, \beta_i)\in\R^2, ~i=3, \cdots, m$, and $K\subseteq\R^n$ be a first-order cone. Then, the following are equivalent to each other:
\begin{itemize}
\item[(c)] $\forall x\in K: ~\max_{i=1,\cdots,m}\{x^TA_ix\}\ge 0.$
\item[(d)] $\exists\  t\in \Delta_m$ such that
$\forall x\in K:\ \sum_{i=1}^mt_ix^TA_ix\ge 0$.
\end{itemize}
\end{theorem}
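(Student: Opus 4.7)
The plan is to dispose of (d) $\Rightarrow$ (c) at once: for any $t \in \Delta_m$ and any $x \in K$, the inequality
\[
\sum_{i=1}^m t_i x^T A_i x \le \max_{i=1,\ldots,m} x^T A_i x
\]
shows that (d) forces (c). The substantive direction is (c) $\Rightarrow$ (d), which I would obtain from the convexity of the joint numerical range
\[
W = \{(x^T A_1 x, \ldots, x^T A_m x) : x \in K\} \subset \R^m
\]
together with the separating hyperplane theorem.

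To establish convexity of $W$, it suffices to show that $W_0 = \{(x^T A_1 x, x^T A_2 x) : x \in K\} \subset \R^2$ is convex, because the relations $A_i = \alpha_i A_1 + \beta_i A_2$ for $i \ge 3$ exhibit $W$ as the image of $W_0$ under a fixed linear map $\R^2 \to \R^m$. For the convexity of $W_0$ I would combine Dines's theorem (recalled in Remark \ref{re:P}) with the observation that each map $x \mapsto x^T A_i x$ is invariant under $x \mapsto -x$. Writing the first-order cone as $K = S + \R_+ d$ with $S$ a subspace, this sign-invariance yields
\[
W_0 = \{(x^T A_1 x, x^T A_2 x) : x \in S + \R d\},
\]
i.e.\ the full joint numerical range of two quadratic forms over the subspace $S + \R d$, which is convex by Dines's theorem applied to the restrictions of $A_1$ and $A_2$ to that subspace.

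Once convexity of $W$ is in hand, condition (c) states exactly that $W$ is disjoint from the open convex orthant $\{z \in \R^m : z_i < 0 \text{ for all } i\}$. A separating hyperplane then produces a nonzero $t \in \R^m$ with $t^T y \ge 0$ for every $y \in W$ and $t^T z \le 0$ for every strictly negative $z$; testing the latter inequality against small negative multiples of the coordinate vectors forces $t_i \ge 0$, and after rescaling one has $t \in \Delta_m$, making the former inequality precisely (d). The main obstacle is establishing the convexity of $W_0$ over a first-order cone rather than over $\R^n$; the sign-flip reduction to the subspace $S + \R d$ handles this cleanly, after which only a routine separation argument remains.
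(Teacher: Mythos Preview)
The paper does not supply its own proof of this theorem; it is quoted from Haeser as a known result. Your argument is correct, and in fact it mirrors precisely the technique the paper uses to establish its generalization, Theorem~\ref{th:kkkkkk}: exploit the sign-invariance $x^TAx=(-x)^TA(-x)$ to replace the cone $K$ by a subspace (the paper phrases this as $K\cup(-K)$ being a subspace for a regular cone; you write it as $S+\R d$), obtain convexity of the joint numerical range over that subspace, and then separate from the open negative orthant to extract $t\in\Delta_m$. The only methodological difference is that you invoke Dines's theorem directly on the two-generator range $W_0$ and push it forward by the linear map determined by the coefficients $(\alpha_i,\beta_i)$, whereas the paper's indicated route to recovering this statement (the remark following Theorem~\ref{th:kkkkkk}) would pass through the extended Polyak theorem together with the identity-padding device of Remark~\ref{re:P}. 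For the rank-two situation your direct appeal to Dines is the more economical path.
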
	
%
%
%
%

As a main result in this section, we further extend Theorem \ref{le:y2} to $m\ (\ge 1)$ quadratic forms, provided that they can be generated by three quadratic forms with a positive definite linear combination.

\begin{theorem}\label{th:kkkkkk}
Let $K$ be a regular cone with ${\dim}(K)\ge 3$, and $\{B_i: i=1,\cdots, m\}$ $(m\ge 1)$ can be generated by $\{A_1, A_2, A_3\}$ satisfying
\begin{equation}\label{eq:K_s}
\forall 0\neq x\in K:\  s_1x^TA_1x+s_2x^TA_2x+s_3x^TA_3x>0
\end{equation}
for some $s_1, s_2, s_3\in\R$.	Then, the following are equivalent to each other:
\begin{itemize}
\item[(e)] $\forall x\in K:\  \max\{x^TB_1x,\cdots,x^TB_mx\}\ge0$.
\item[(f)] $\exists\  t\in\Delta_m$ such that $\forall x\in K:\ \sum_{i=1}^mt_ix^TB_ix\ge 0$.
\end{itemize}
\end{theorem}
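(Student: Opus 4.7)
The implication (f) $\Rightarrow$ (e) is immediate: since $t \in \Delta_m$ has $t_i \ge 0$ summing to $1$, we have $\max_i x^T B_i x \ge \sum_i t_i x^T B_i x \ge 0$, so (e) holds. The substance lies in (e) $\Rightarrow$ (f), which I would establish in two stages by combining the extended Polyak theorem (Theorem \ref{th:ex2}) with a separating hyperplane argument applied in $\R^m$.

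First I would reduce to the linear span $V = K \cup (-K)$ of the regular cone $K$, which by assumption is a subspace of $\R^n$ of dimension $d \ge 3$. Since every quadratic form $x \mapsto x^T B_i x$ is even in $x$, one has
\[
\Omega := \{(x^T B_1 x, \ldots, x^T B_m x) : x \in K\} = \{(x^T B_1 x, \ldots, x^T B_m x) : x \in V\},
\]
and the positive-definiteness hypothesis \eqref{eq:K_s}, also being even, extends from nonzero $x \in K$ to all nonzero $x \in V$. Picking a basis matrix $P$ for $V$ and identifying $V$ with $\R^d$, the restricted matrices $\tilde A_j = P^T A_j P$ satisfy $s_1 \tilde A_1 + s_2 \tilde A_2 + s_3 \tilde A_3 \succ 0$, and $\{\tilde B_i = P^T B_i P\}$ is generated by $\{\tilde A_1, \tilde A_2, \tilde A_3\}$ with the same coefficients. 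Since $d \ge 3$, Theorem \ref{th:ex2} applies on $\R^d$ and shows that $\Omega$ is a convex cone in $\R^m$.

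Next, (e) translates to $\Omega \cap (-\R_{++}^m) = \emptyset$, where $-\R_{++}^m$ denotes the open negative orthant. Both sets are nonempty and convex, and $-\R_{++}^m$ is open, so the standard separation theorem supplies a nonzero $t \in \R^m$ with $t^T y \ge t^T z$ for every $y \in \Omega$ and every $z \in -\R_{++}^m$. Letting $z \to 0$ yields $t^T y \ge 0$ on $\Omega$; letting an individual coordinate of $z$ tend to $-\infty$ forces $t_i \ge 0$ for every $i$. Normalizing so that $\sum_i t_i = 1$, which is possible since $t \ne 0$ and $t \ge 0$, produces the required $t \in \Delta_m$ with $\sum_i t_i x^T B_i x \ge 0$ for all $x \in K$, proving (f).

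The main obstacle I anticipate is the reduction step, making sure the positive-definiteness hypothesis \eqref{eq:K_s} and the generation of $\{B_i\}$ by $\{A_1,A_2,A_3\}$ both transfer cleanly to the subspace $V$ so that Theorem \ref{th:ex2} is genuinely applicable there; once convexity of $\Omega$ is in hand, the cone-separation argument is routine. It is worth noting that closedness of $\Omega$, which can fail under these hypotheses as a remark in Section 2 shows, is not needed here, precisely because $-\R_{++}^m$ is open so any separating hyperplane is enough.
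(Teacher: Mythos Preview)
Your proposal is correct and follows essentially the same route as the paper: reduce from the regular cone $K$ to the subspace $K\cup(-K)\cong\R^d$ via a basis matrix, observe that both the generation relation and the positive-definiteness hypothesis transfer to the restricted matrices, invoke Theorem~\ref{th:ex2} on $\R^d$ to obtain convexity of the joint image $\Omega$, and then separate $\Omega$ from the open negative orthant to produce $t\in\Delta_m$. The only cosmetic difference is ordering: the paper first treats the case $K=\R^n$ and afterwards reduces the general regular cone to it, whereas you perform the reduction first and carry out the separation argument directly in $\R^d$; the content is the same.
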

\begin{proof}	
Proof of $(e)\rightarrow(f)$.
We first assume that $K=\R^n$.
Let \[\Omega=\{(x^TB_1x, x^TB_2x,\cdots,x^TB_mx):~x\in \R^n\}.\] It holds from $(e)$ that ${\rm int}\R_-^m\cap \Omega=\emptyset$. According to Theorem \ref{th:ex2}, $\Omega$ is a  convex cone. Then, according to the well-known   convex set separation theorem, there exists $t\in\R^m\setminus\{0\}$ such that
\begin{eqnarray}
	&&t^Ty\le0,\ \forall y\in \R_-^m,\label{eq:S}\\
	&&t^Ty\ge0,\ \forall y\in \Omega.\label{eq:Q}
\end{eqnarray}
It follows from the definition of the set $\R_-^m$ and \eqref{eq:S} that $t\in\R_+^m$. Then $\sum_{i=1}^mt_i>0$.
Without loss of generality, we assume $t\in\Delta_m$ in \eqref{eq:S} and \eqref{eq:Q}, since otherwise, we can divide both sides by $\sum_{i=1}^mt_i$.
Then \eqref{eq:Q} implies that
\[
\forall x\in\R^n:\ \sum_{i=1}^{m}t_ix^TB_ix\ge0,
\]
or equivalently, $\sum_{i=1}^m t_iB_i\succeq 0$.

Now let $K$  be a regular cone. Then \eqref{eq:K_s} is equivalent to
\begin{eqnarray}
\forall 0\neq x\in K\cup  (-K):\  s_1x^TA_1x+s_2x^TA_2x+s_3x^TA_3x>0,\label{eq:q1}
\end{eqnarray}
and $(e)$ can be rewritten as
\begin{eqnarray}
\forall x\in K\cup  (-K):\  \max\{x^TB_1x,x^TB_2x,\cdots,x^TB_mx\}\ge0.\label{eq:q2}
\end{eqnarray}
Let $k=\dim(K)\ (\ge3)$. There exists $Q\in\R^{n\times k}$ such that
\begin{equation}
K\cup (-K)=\{Qy:~y\in\R^k\}. \label{K-K}
\end{equation}
By substituting \eqref{K-K}
into \eqref{eq:q1} and \eqref{eq:q2}, respectively, we obtain
\begin{eqnarray}
&&\forall 0\neq y\in\R^k:\  s_1y^TQ^TA_1Qy+s_2y^TQ^TA_2Qy+s_3y^TQ^TA_3Qy>0,\\
&&\forall y\in \R^k:\  \max\{y^TQ^TB_1Qy,y^TQ^TB_2Qy,\cdots,y^TQ^TB_mQy\}\ge0.
\end{eqnarray}
According to the above first part of this proof, there exists $  t\in\Delta_m$ such that
\[\forall y\in \R^k:\ \sum_{i=1}^mt_iy^TQ^TB_iQy\ge 0,\]
which is equivalent to
\[\forall x\in K:\ \sum_{i=1}^mt_ix^TB_ix\ge 0.\]

Proof of $(f)\rightarrow(e)$. Suppose, on the contrary,
there exists $\tilde x\in K$ such that $\tilde x^TB_i\tilde x<0$ holds for $i=1,\cdots,m$. Then there is no $t$ satisfying
\begin{equation*}
	t\ge0, \ t\neq0,\ \tilde x^T(t_1B_1+\cdots+t_mB_m)\tilde x\ge 0,
\end{equation*}
which is a contradiction to $(f)$.
\qed
\end{proof}
\begin{remark}
Based on an analysis similar to that in Remark \ref{re:P}, we can show that Theorem \ref{th:kkkkkk} covers not only the original Yuan's lemma (Theorem \ref{le:y}), but also the extended Yuan's lemma (Theorem \ref{le:y2}).
\end{remark}

We apply our further extended Yuan's lemma to the second-order necessary optimality condition in nonlinear programming.

Consider the following general nonlinear programming in $\R^n$:
\begin{equation}\label{eq:NLP}\tag{NLP}
	\begin{array}{cl}
		\min & f(x)\\
		\ST   &
		g_i(x)\le0,~i=1,\cdots,p, \\
		& h_j(x)=0,~j=1,\cdots,q,\\
	\end{array}
\end{equation}
where all involved functions are twice continuously differentiable.


%

Firstly, we review some definitions and classical results on the second-order necessary optimality condition, which can be found in \cite{Bonnans00}.

\begin{itemize}
\item [(1)] Denote by $I(x)$ the set of  inequality constraints active at $x$, i.e.,
	\[I(x):=\{i\in\{1,\cdots,p\}:~g_i(x)=0\}.\]
\item [(2)] The Lagrangian and the generalized Lagrangian associated with  \eqref{eq:NLP} are defined as
\begin{eqnarray*}
&&L(x,\lambda,\mu):= f(x)+\sum_{i=1}^{p}\lambda_ig_i(x)+\sum_{j=1}^{q}\mu_jh_j(x),\\ &&L^{g}(x,\lambda_0,\lambda,\mu):= \lambda_0f(x)+\sum_{i=1}^{p}\lambda_ig_i(x)+\sum_{j=1}^{q}\mu_jh_j(x),
\end{eqnarray*}
respectively.
	\item [(3)]	The set $\Lambda(\bar x)$ of Lagrange multipliers at a feasible point $\bar x$ of \eqref{eq:NLP} is defined as the set of nonzero vectors $(\lambda, \mu)$ satisfying the following first-order necessary optimality conditions:
	\[\nabla_x L(\bar x,\lambda,\mu)=0;\  \lambda_i\ge0,\ \lambda_ig_i(\bar x)=0,  ~i=1,\cdots,p.\]
	\item [(4)]	The set $\Lambda^g(\bar x)$ of generalized Lagrangian multiplier at the feasible point $\bar x$ of \eqref{eq:NLP} is defined as the set of nonzero vectors $(\lambda_0, \lambda, \mu)$ satisfying the following necessary optimality conditions:
	\[\nabla_x L^{g}(\bar x,\lambda_0,\lambda,\mu)=0;\  \lambda_0\ge0;\ \lambda_i\ge0,\ \lambda_ig_i(\bar x)=0,~ i=1,\cdots,p.\]
For a generalized Lagrange multiplier $(\lambda_0, \lambda, \mu)$ with $\lambda_0=0$, we call $(\lambda, \mu)$ a singular Lagrange multiplier.
	\item [(5)] The critical cone associated with a feasible point $\bar x$ of $\eqref{eq:NLP}$ is
	\begin{equation}\label{eq:critical cone}
\begin{array}{cl}
		C(\bar x):=\{v: & \nabla f(\bar x)^Tv\le 0,\ \nabla g_i(\bar x)^Tv\le 0, ~ i\in I(\bar x);\\
		 &
		 \nabla h_j(\bar x)^Tv=0, ~j=1,\cdots,q\}.
	\end{array}
	\end{equation}
\end{itemize}
By the definition of the Lagrange multiplier, $\Lambda(\bar x)$ is a closed polyhedral convex set, since it can be expressed as the intersection of finite collection of closed half-spaces.
And by the definition of the generalized Lagrange multiplier, $\Lambda^g(\bar x)$ is a convex cone.
If a generalized Lagrange multiplier $(\lambda_0, \lambda, \mu)$ satisfies  $\lambda_0=1$, then $(\lambda, \mu)\in\Lambda(\bar x)$ is a Lagrange multiplier.

The first lemma could help to characterize the relation between the singular Lagrange  multiplier and the Lagrange multiplier.
\begin{lemma}[Proposition 3.14 in \cite{Bonnans00}]\label{re:singular}
If the set of Lagrange multiplier $\Lambda(\bar x)$ is nonempty, then the set of singular Lagrange multiplier, together with $0$, forms the recession cone of $\Lambda(\bar x)$.
\end{lemma}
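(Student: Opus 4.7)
The plan is to prove the lemma by showing the two set inclusions directly from the defining conditions of $\Lambda(\bar x)$ and of singular Lagrange multipliers. Write $\Lambda^s(\bar x)$ for the set of singular Lagrange multipliers, i.e., nonzero $(\lambda,\mu)$ with $\sum_{i=1}^p\lambda_i\nabla g_i(\bar x)+\sum_{j=1}^q\mu_j\nabla h_j(\bar x)=0$, $\lambda_i\ge 0$ and $\lambda_ig_i(\bar x)=0$ for all $i$. Let $R(\bar x)$ denote the recession cone of the closed convex set $\Lambda(\bar x)$. The goal is to show $\Lambda^s(\bar x)\cup\{0\}=R(\bar x)$.

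For the inclusion $\Lambda^s(\bar x)\cup\{0\}\subseteq R(\bar x)$, I would pick any $(\lambda,\mu)\in\Lambda(\bar x)$ (which exists by hypothesis) and any $(d_\lambda,d_\mu)\in\Lambda^s(\bar x)\cup\{0\}$, and verify that $(\lambda+td_\lambda,\mu+td_\mu)\in\Lambda(\bar x)$ for every $t\ge 0$. The stationarity condition follows because $\nabla_xL(\bar x,\lambda+td_\lambda,\mu+td_\mu)=\nabla_xL(\bar x,\lambda,\mu)+t\,\nabla_xL^g(\bar x,0,d_\lambda,d_\mu)=0+0=0$. Nonnegativity of the inequality multipliers is clear from $\lambda_i\ge 0$ and $d_{\lambda,i}\ge 0$, and complementary slackness is preserved term-by-term since $\lambda_ig_i(\bar x)=0$ and $d_{\lambda,i}g_i(\bar x)=0$. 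This puts the whole ray in $\Lambda(\bar x)$, so $(d_\lambda,d_\mu)\in R(\bar x)$.

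For the reverse inclusion $R(\bar x)\subseteq\Lambda^s(\bar x)\cup\{0\}$, I fix $(\lambda,\mu)\in\Lambda(\bar x)$ and a nonzero $(d_\lambda,d_\mu)\in R(\bar x)$, so $(\lambda+td_\lambda,\mu+td_\mu)\in\Lambda(\bar x)$ for all $t\ge 0$. Subtracting the stationarity identity at $t=0$ from the one at $t>0$ and dividing by $t$ yields $\sum_{i=1}^pd_{\lambda,i}\nabla g_i(\bar x)+\sum_{j=1}^qd_{\mu,j}\nabla h_j(\bar x)=0$. Letting $t\to\infty$ in $\lambda_i+td_{\lambda,i}\ge 0$ forces $d_{\lambda,i}\ge 0$. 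Finally, the complementary slackness condition $(\lambda_i+td_{\lambda,i})g_i(\bar x)=0$ combined with $\lambda_ig_i(\bar x)=0$ gives $d_{\lambda,i}g_i(\bar x)=0$. Thus $(d_\lambda,d_\mu)$ meets every defining condition of a singular Lagrange multiplier.

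There is essentially no deep obstacle here; the proof is a purely mechanical check against the definitions, made possible by the affine structure of the Karush-Kuhn-Tucker system in $(\lambda,\mu)$. The only mildly delicate point is handling the nonnegativity constraint $\lambda_i+td_{\lambda,i}\ge 0$ for all $t\ge 0$, which requires $d_{\lambda,i}\ge 0$ rather than merely an equality. All other KKT conditions are linear in the multipliers, so preservation along a ray translates immediately into the corresponding homogeneous condition on the direction.
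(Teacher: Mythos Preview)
Your argument is correct. Both inclusions are verified cleanly: the forward direction checks that adding a singular multiplier direction to any Lagrange multiplier preserves stationarity, sign, and complementarity term-by-term; the reverse direction extracts the homogeneous conditions on the direction by subtracting the KKT identities at $t>0$ and $t=0$ and by letting $t\to\infty$ in the sign constraint. The only point worth making explicit is that $\Lambda(\bar x)$ is a nonempty \emph{closed} polyhedral set (as the paper notes just before the lemma), so the recession cone is well-defined and it suffices to test recession directions from a single base point; you implicitly rely on this when you ``pick any $(\lambda,\mu)\in\Lambda(\bar x)$'' rather than all of them.

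As for comparison with the paper: the paper does not supply its own proof of this lemma. It is quoted verbatim as Proposition~3.14 of \cite{Bonnans00} and used as a black box to derive Corollary~\ref{co:soc1}. Your direct verification is exactly the standard proof one would expect in the cited reference, exploiting that the KKT system is affine in $(\lambda,\mu)$ so that the recession cone of the multiplier polyhedron is simply the solution set of the associated homogeneous system.
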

According to Lemma \ref{re:singular}, we have the following result.
\begin{corollary}\label{co:soc1}
The set of Lagrange multiplier $\Lambda(\bar x)$ is bounded if and only if the set of singular Lagrange  multiplier is empty.
\end{corollary}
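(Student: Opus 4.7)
The plan is to view this as a direct consequence of Lemma \ref{re:singular} combined with a standard fact from convex analysis: a nonempty closed convex set is bounded if and only if its recession cone is the trivial cone $\{0\}$. Since $\Lambda(\bar x)$ is the intersection of finitely many closed half-spaces (as noted after the definitions), it is a closed (polyhedral) convex set, so this characterization is directly applicable.

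First, I would dispose of the trivial case in which $\Lambda(\bar x)$ is empty (vacuously bounded, and then there is no Lagrange multiplier to generate a nontrivial direction of recession, so the assumption of Lemma \ref{re:singular} is inapplicable and the statement is interpreted in its standard sense for nonempty $\Lambda(\bar x)$). Assuming then that $\Lambda(\bar x)$ is nonempty, I would invoke Lemma \ref{re:singular} to identify the recession cone of $\Lambda(\bar x)$: it equals the set of singular Lagrange multipliers together with $0$. Denoting this recession cone by $\Lambda(\bar x)^\infty$, the chain of equivalences
\begin{equation*}
\Lambda(\bar x) \text{ bounded} \;\Longleftrightarrow\; \Lambda(\bar x)^\infty = \{0\} \;\Longleftrightarrow\; \{\text{singular Lagrange multipliers}\} = \emptyset
\end{equation*}
yields the result, where the first equivalence is the classical recession-cone criterion for boundedness of a nonempty closed convex set and the second is exactly the content of Lemma \ref{re:singular}.

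There is no real obstacle here; the corollary is essentially a restatement of Lemma \ref{re:singular} via the recession-cone boundedness criterion. The only point requiring minor care is to note that $\Lambda(\bar x)$ is closed and convex (a property already recorded in the excerpt from its polyhedral description), which is the hypothesis needed to apply the convex-analytic boundedness criterion.
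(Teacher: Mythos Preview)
Your argument is correct and matches the paper's approach: the paper simply states that Corollary~\ref{co:soc1} follows from Lemma~\ref{re:singular} without further proof, and your use of the recession-cone boundedness criterion for closed convex sets is exactly the standard justification implicit in that remark.
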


The following
Fritz-John second-order necessary optimality condition holds
without any constraint qualification (CQ).
\begin{lemma}[Proposition 5.48 in \cite{Bonnans00}]\label{le:Bonnans}
If $\bar x$ is a locally optimal solution of \eqref{eq:NLP}, then for every $v\in C(\bar x)$, there exists a generalized Lagrange multiplier $(\lambda_0,\lambda,\mu)\in\Lambda^g(\bar x)$ such that
\begin{equation}\label{eq:fj_ne}
v^T\nabla^2L^g(\bar x, \lambda_0,\lambda,\mu)v\ge 0.
\end{equation}
\end{lemma}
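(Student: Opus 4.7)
The plan is to prove this Fritz-John style second-order necessary condition by combining a second-order Taylor expansion along an arbitrary critical direction $v$ with a convex-set separation argument in the space of linear-quadratic data. Because no constraint qualification is imposed, the objective must be put on equal footing with the constraints through the multiplier $\lambda_0$, in exact analogy with the passage from KKT to Fritz-John at first order.

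Fix an arbitrary $v\in C(\bar x)$. For each $w\in\R^n$, form the linear-quadratic vector
\[
\Phi(w):=\bigl(a_0(w),\,(a_i(w))_{i\in I(\bar x)},\,(b_j(w))_{j=1}^q\bigr),
\]
where $a_0(w)=v^T\nabla^2f(\bar x)v+2\nabla f(\bar x)^Tw$, $a_i(w)=v^T\nabla^2g_i(\bar x)v+2\nabla g_i(\bar x)^Tw$ for $i\in I(\bar x)$, and $b_j(w)=v^T\nabla^2h_j(\bar x)v+2\nabla h_j(\bar x)^Tw$ for $j=1,\ldots,q$. Introduce the convex set
\[
S:=\Phi(\R^n)+\bigl(\R_+\times\R_+^{|I(\bar x)|}\times\{0\}^q\bigr).
\]
The central step is to show that $S$ does not intersect the open region $\R_{<0}\times\R_{\le 0}^{|I(\bar x)|}\times\{0\}^q$: if some $w$ placed $\Phi(w)$ there, the curve $x(t)=\bar x+tv+\tfrac{t^2}{2}w$, corrected by an $o(t^2)$ term via a Ljusternik/implicit-function argument applied to the equality constraints, would be feasible for all small $t>0$ and would satisfy $f(x(t))<f(\bar x)$, contradicting the local optimality of $\bar x$. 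A Motzkin-style separation of the convex set $S$ from this disjoint open cone then produces a nonzero multiplier $(\lambda_0,\lambda_{I(\bar x)},\mu)$ with $\lambda_0\ge 0$ and $\lambda_{I(\bar x)}\ge 0$; setting $\lambda_i=0$ for $i\notin I(\bar x)$ gives the complementarity $\lambda_ig_i(\bar x)=0$. Since the separation inequality must hold for every $w\in\R^n$, the $w$-linear part of it must vanish, giving the stationarity $\nabla_xL^g(\bar x,\lambda_0,\lambda,\mu)=0$, while evaluating at $w=0$ leaves precisely \eqref{eq:fj_ne}.

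The main obstacle is the corrective feasibility step in the presence of equality constraints: absorbing the quadratic residuals of the inequality constraints is routine via slack, but the Ljusternik/implicit-function correction for the equalities is clean only when $\{\nabla h_j(\bar x)\}_{j=1}^q$ is linearly independent. When linear independence fails, the Fritz-John formulation itself supplies the escape: any nonzero vector $\mu$ with $\sum_j\mu_j\nabla h_j(\bar x)=0$ makes $(0,0,\pm\mu)$ a generalized Lagrange multiplier, and one of the two signs automatically satisfies \eqref{eq:fj_ne}. One would therefore split the proof into a regular and a degenerate branch, the latter accommodating the singular multiplier case intrinsic to the Fritz-John framework.
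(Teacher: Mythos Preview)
The paper does not prove this lemma: it is quoted verbatim as Proposition~5.48 of Bonnans and Shapiro and used as a black box, so there is no ``paper's own proof'' to compare your proposal against.

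That said, your outline is the standard route to the Fritz--John second-order condition and is essentially sound. Two small technical points are worth tightening. First, the region you separate $S$ from should be $\R_{<0}\times\R_{<0}^{|I(\bar x)|}\times\{0\}^q$ (strict on all inequality coordinates, i.e.\ the relative interior), not $\R_{<0}\times\R_{\le 0}^{|I(\bar x)|}\times\{0\}^q$: if some $a_i(w)=0$ while $\nabla g_i(\bar x)^Tv=0$, the corrected curve $x(t)$ need not satisfy $g_i(x(t))\le 0$, so you cannot derive a contradiction from the weaker region. Disjointness from the relative interior is still enough for the separating hyperplane, so the rest of the argument is unaffected. Second, ``evaluating at $w=0$'' only yields~\eqref{eq:fj_ne} because the separation inequality is asserted for all $w\in\R^n$; this is exactly what your full-space separation in $\R^{1+|I(\bar x)|+q}$ gives, but be explicit that this is why the equality multipliers $\mu_j$ come out of the hyperplane normal rather than being introduced afterwards. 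Your degenerate branch (linearly dependent $\nabla h_j(\bar x)$, pick $(0,0,\pm\mu)$) is correct and is precisely why the result needs no constraint qualification.
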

As shown in Lemma \ref{le:Bonnans}, the generalized Lagrange multiplier satisfying \eqref{eq:fj_ne} may depend on the choice of $v\in C(\bar x)$.
Based on the inconsistency of the generalized Lagrange multiplier in terms of $v\in C(\bar x)$,  we are interested in when the second-order necessary optimality condition can be
verified by a single Lagrange multiplier.

\begin{definition}
Let $\bar x$ be a local minimizer of \eqref{eq:NLP}. The standard second-order necessary optimality condition holds at $\bar x$, if there exists a Lagrange multiplier $(\lambda,\mu)\in\Lambda(\bar x)$ such that
\[ \forall v\in C(\bar x):\ v^T\nabla^2L(\bar x, \lambda,\mu)v\ge 0.
\]
\end{definition}	
The usual approach to avoid the inconsistency of the generalized Lagrange multiplier and hence strengthen Lemma \ref{le:Bonnans}, is to add qualification  constraints, for example, the following well-known linear independence constraint qualification (LICQ).
\begin{definition}
LICQ  holds at $\bar x$, a feasible point of \eqref{eq:NLP}, if the vectors $ \nabla g_i(\bar x)~(i\in I),\ \nabla h_j(\bar x)~(j=1,\cdots,q)$ are linearly independent, that is,  the linear system
	\begin{equation}\label{eq:zero}
		\sum_{i\in I}\lambda_i\nabla g_i(\bar x)+\sum_{j=1}^{p}\mu_j\nabla h_j(\bar x)=0
	\end{equation}
in terms of $(\lambda,\mu)$ has no nonzero solution.
\end{definition}
LICQ ensures not only that the singular Lagrangian multiplier does not exist, but also that the set of Lagrangian multiplier is a single-point set. Therefore, it follows from Lemma \ref{le:Bonnans} that the standard second-order necessary optimality condition holds true under LICQ.

However, LICQ could fail to hold at local minimizer of \eqref{eq:NLP}. Mangasarian-Fromovitz constraint qualification (MFCQ), a more relaxed well-known CQ, is first proposed in \cite{Mangasarian67}.
\begin{definition}
MFCQ holds at $\bar x$, if the following conditions are satisfied:
\begin{itemize}
\item[(i)] The vectors $\nabla h_j(\bar x), j=1,\cdots,q$ are linearly independent.
\item[(ii)] $\exists  u\in\R^n$:~$\nabla g_i(\bar x)^Tu<0,~i\in I(\bar x);\ \nabla h_j(\bar x)^Tu=0, ~j=1,\cdots,q.$
\end{itemize}
\end{definition}
The following equivalent characterization of MFCQ is due to  Gauvin \cite{gauvin77}.
\begin{theorem}{\rm \cite{gauvin77}}\label{th:e_MFCQ}
Let $\bar x$ be a local minimizer of \eqref{eq:NLP}. Then MFCQ holds at $\bar x$ if and only if the set of Lagrange multipliers $\Lambda(\bar x)$ is nonempty and bounded.
\end{theorem}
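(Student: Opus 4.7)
The plan is to prove both directions separately, both resting on the characterization of boundedness of $\Lambda(\bar x)$ through the absence of singular Lagrange multipliers stated in Corollary \ref{co:soc1}. Sufficiency will combine this with the Fritz-John necessary condition (the content of Lemma \ref{le:Bonnans} applied at $v=0\in C(\bar x)$), while necessity will use Motzkin's transposition theorem.

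\emph{Sufficiency.} I would first invoke Lemma \ref{le:Bonnans} at $v=0$ to obtain some $(\lambda_0,\lambda,\mu)\in\Lambda^g(\bar x)$. The key claim is that $\lambda_0>0$ under MFCQ. Suppose, for contradiction, $\lambda_0=0$; then $(\lambda,\mu)\ne 0$, $\lambda\ge 0$, $\lambda_i=0$ for $i\notin I(\bar x)$, and $\sum_{i\in I(\bar x)}\lambda_i\nabla g_i(\bar x)+\sum_{j=1}^{q}\mu_j\nabla h_j(\bar x)=0$. Taking the inner product with the MFCQ direction $u$ annihilates the equality-constraint terms and leaves $\sum_{i\in I(\bar x)}\lambda_i\nabla g_i(\bar x)^Tu=0$; since each summand is nonpositive and each factor $\nabla g_i(\bar x)^Tu$ is strictly negative, this forces $\lambda=0$. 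The stationarity then reduces to $\sum_j\mu_j\nabla h_j(\bar x)=0$, and MFCQ(i) forces $\mu=0$, contradicting $(\lambda,\mu)\ne 0$. Hence $\lambda_0>0$, and dividing by $\lambda_0$ yields a genuine Lagrange multiplier, so $\Lambda(\bar x)\ne\emptyset$. The very same argument applied to any candidate singular multiplier shows that none exists, and Corollary \ref{co:soc1} then delivers the boundedness of $\Lambda(\bar x)$.

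\emph{Necessity.} Assume $\Lambda(\bar x)$ is nonempty and bounded. By Corollary \ref{co:soc1}, no singular Lagrange multiplier exists. MFCQ(i) follows at once: a nonzero $\mu$ with $\sum_j\mu_j\nabla h_j(\bar x)=0$, paired with $\lambda=0$, would be a singular multiplier. For MFCQ(ii), I would apply Motzkin's transposition theorem to the system $\nabla g_i(\bar x)^Tu<0$ $(i\in I(\bar x))$, $\nabla h_j(\bar x)^Tu=0$ $(j=1,\dots,q)$: either it has a solution $u$ (which is MFCQ(ii)), or there exist $\lambda_i\ge 0$ $(i\in I(\bar x))$ not all zero and $\mu\in\R^q$ with $\sum_{i\in I(\bar x)}\lambda_i\nabla g_i(\bar x)+\sum_j\mu_j\nabla h_j(\bar x)=0$. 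Extending $\lambda$ by zero on indices outside $I(\bar x)$, the second alternative yields a singular Lagrange multiplier, a contradiction. The main obstacle here is precisely the use of Motzkin's (rather than Gordan's or Farkas') theorem: it is the ``nonzero $\lambda$'' clause of the alternative that delivers the contradiction with the absence of singular multipliers, and weaker transposition lemmas would leave MFCQ(ii) unproved.
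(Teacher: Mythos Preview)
The paper does not prove this theorem; it is stated with a citation to Gauvin \cite{gauvin77} and used as a black box, so there is no in-paper argument to compare against.

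Your proposal is correct. The sufficiency direction is the standard argument: Fritz--John gives a nonzero generalized multiplier, and pairing the stationarity relation with the MFCQ direction $u$ forces $\lambda_0>0$; the identical computation rules out singular multipliers, so Corollary~\ref{co:soc1} yields boundedness. For necessity, your use of Motzkin's transposition theorem is the right tool, and your observation that the ``$\lambda\neq 0$'' clause in Motzkin's alternative is exactly what produces a singular multiplier (hence the contradiction) is on point; Gordan alone would not handle the equality constraints. One minor edge case you leave implicit: if $I(\bar x)=\emptyset$ there are no strict inequalities to feed into Motzkin, but then MFCQ(ii) is vacuously satisfied (take $u=0$), so nothing breaks.
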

According to Theorem \ref{th:e_MFCQ}, we can strengthen Lemma \ref{le:Bonnans} under MFCQ.
\begin{lemma}\label{co:soc}
Assume that MFCQ holds at the feasible point $\bar x$ of \eqref{eq:NLP}.
If $\bar x$ is a local minimizer, then for every $v\in C(\bar x)$, there exists a Lagrange multiplier $(\lambda,\mu)\in\Lambda(\bar x)$ such that
	\[v^T\nabla^2L(\bar x, \lambda,\mu)v\ge 0.\]
\end{lemma}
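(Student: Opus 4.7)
The plan is to reduce this statement directly to the Fritz--John second-order necessary condition in Lemma \ref{le:Bonnans}, using MFCQ solely to rule out the degenerate case $\lambda_0=0$. No convexity or alternative theorem is needed at this stage; the role of the further extended Yuan's lemma will enter only later, when one wishes to weaken the boundedness of $\Lambda(\bar x)$ to a rank-type assumption.

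Concretely, fix an arbitrary $v\in C(\bar x)$ and apply Lemma \ref{le:Bonnans} to obtain a generalized Lagrange multiplier $(\lambda_0,\lambda,\mu)\in\Lambda^g(\bar x)$ with $v^T\nabla^2 L^g(\bar x,\lambda_0,\lambda,\mu)v\ge 0$. I would then argue that $\lambda_0>0$ as follows: by Theorem \ref{th:e_MFCQ}, MFCQ at $\bar x$ implies that $\Lambda(\bar x)$ is nonempty and bounded; Corollary \ref{co:soc1} then tells us that no singular Lagrange multiplier exists. If $\lambda_0$ were equal to $0$, the pair $(\lambda,\mu)$ would be nonzero (since $(\lambda_0,\lambda,\mu)\neq 0$ by the very definition of $\Lambda^g(\bar x)$) and, by inspection of the defining conditions of $\Lambda^g(\bar x)$, would coincide exactly with a singular Lagrange multiplier, contradicting Corollary \ref{co:soc1}.

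Having established $\lambda_0>0$, I would normalize by setting $\lambda':=\lambda/\lambda_0$ and $\mu':=\mu/\lambda_0$. A direct check against the definitions in items (3) and (4) shows $(\lambda',\mu')\in\Lambda(\bar x)$, and the identity $L^g(x,\lambda_0,\lambda,\mu)=\lambda_0\,L(x,\lambda',\mu')$ yields, after differentiating twice in $x$, that $\nabla^2 L^g(\bar x,\lambda_0,\lambda,\mu)=\lambda_0\,\nabla^2 L(\bar x,\lambda',\mu')$. Dividing the Fritz--John inequality by $\lambda_0>0$ therefore gives $v^T\nabla^2 L(\bar x,\lambda',\mu')v\ge 0$, as required. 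The substantive content is fully absorbed into Lemma \ref{le:Bonnans}, Theorem \ref{th:e_MFCQ} and Corollary \ref{co:soc1}; the only real step is the exclusion of $\lambda_0=0$ via the nonexistence of singular multipliers, and the remainder is bookkeeping that does not present any genuine obstacle.
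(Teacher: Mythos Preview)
Your proposal is correct and matches exactly the route the paper indicates: the paper does not spell out a proof of Lemma~\ref{co:soc} but simply remarks that, according to Theorem~\ref{th:e_MFCQ}, one can strengthen Lemma~\ref{le:Bonnans} under MFCQ, which is precisely the reduction you carry out via Corollary~\ref{co:soc1} to exclude $\lambda_0=0$ and then normalize.
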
	
However, MFCQ itself cannot ensure the standard second-order necessary optimality condition, see a counterexample in \cite{Anitescu00}.
So additional assumptions are required as in \cite{Baccari04,haeser17}. In the following, we propose a new assumption together with MFCQ to guarantee the standard second-order necessary optimality condition. It  generalizes the assumptions presented in \cite{Baccari04,haeser17}.
\begin{theorem}\label{th:main}
Let $x^*$ be a local minimzer of \eqref{eq:NLP} and MFCQ holds at $x^*$. Let $(\lambda^1,\mu^1), (\lambda^2,\mu^2), \cdots, (\lambda^m,\mu^m)$ be the vertices of the Lagrange multiplier set $\Lambda(x^*)$. If there exist $A_{1}, A_{2}, A_{3}\in\mathcal{S}^n$ such that $\{{\nabla^2 L(x^*,\lambda^i,\mu^i)}: i=1, \cdots, m\}$ can be generated by $\{A_{1}, A_{2}, A_{3}\}$ and $s_{1}A_{1}+s_{2} A_{2}+s_{3}A_{3}\succ0$ holds for some $s_{1}, s_{2}, s_{3}\in\R$.
Then, for every regular cone $K\subseteq C(x^*)$ with ${\rm dim}K\ge 3$, there exists $(\lambda,\mu)\in\Lambda(x^*)$ such that
	\begin{equation}\label{eq:lm}
		\forall v\in K:\ v^T\nabla^2 L(x^*,\lambda,\mu)v\ge0.
	\end{equation}
\end{theorem}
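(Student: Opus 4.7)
The plan is to invoke Lemma \ref{co:soc} to obtain, for each direction $v$, a Lagrange multiplier certifying nonnegativity; convert this into a max-type condition over the regular cone $K$; and then apply the further extended Yuan's lemma (Theorem \ref{th:kkkkkk}) to produce a \emph{single} Lagrange multiplier that works uniformly on $K$.

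For the setup, since MFCQ holds at $x^*$, Theorem \ref{th:e_MFCQ} ensures that $\Lambda(x^*)$ is a nonempty, bounded, closed polyhedral convex set---i.e., a polytope---so it coincides with $\mathrm{conv}\{(\lambda^i,\mu^i):i=1,\ldots,m\}$. Writing $B_i:=\nabla^2 L(x^*,\lambda^i,\mu^i)$, the affine dependence of $\nabla^2 L(x^*,\cdot,\cdot)$ on the multipliers yields, for every $t\in\Delta_m$, the identity $\nabla^2 L(x^*,\sum_i t_i\lambda^i,\sum_i t_i\mu^i)=\sum_i t_i B_i$. Applying Lemma \ref{co:soc} to an arbitrary $v\in C(x^*)$ and expanding the resulting multiplier as a convex combination of the vertices shows that some convex combination of the scalars $v^T B_i v$ is nonnegative, hence $\max_i v^T B_i v\ge 0$. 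In particular, this holds for every $v\in K\subseteq C(x^*)$, which is precisely condition $(e)$ of Theorem \ref{th:kkkkkk} on the cone $K$.

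Next I would verify hypothesis \eqref{eq:K_s}: the condition $s_1A_1+s_2A_2+s_3A_3\succ 0$ is a statement on all of $\R^n$, so it a fortiori delivers strict positivity on $K\setminus\{0\}$. Theorem \ref{th:kkkkkk} then produces $t\in\Delta_m$ such that $\sum_i t_i v^T B_i v\ge 0$ for every $v\in K$. Setting $(\lambda,\mu):=\sum_i t_i(\lambda^i,\mu^i)\in\Lambda(x^*)$ and invoking the affinity identity above gives $v^T\nabla^2 L(x^*,\lambda,\mu)v=\sum_i t_i v^T B_i v\ge 0$ for all $v\in K$, which is exactly \eqref{eq:lm}.

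The main obstacle is the quantifier exchange performed in the middle step: Lemma \ref{co:soc} produces a multiplier that depends on $v$, whereas the conclusion demands a single multiplier valid on the entire cone $K$. This is precisely the work carried out by the further extended Yuan's lemma, and the assumption that the matrices $\{B_i\}$ are generated by three matrices with a positive definite linear combination is exactly the data needed to invoke it. The remaining ingredients---the polytope structure of $\Lambda(x^*)$ under MFCQ and the affine dependence of $\nabla^2 L$ on the multipliers---are routine.
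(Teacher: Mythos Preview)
Your proposal is correct and follows essentially the same route as the paper: invoke Lemma \ref{co:soc} to obtain the pointwise inequality, reduce the supremum over $\Lambda(x^*)$ to a maximum over the vertices via the polytope structure and the affinity of $\nabla^2 L$ in $(\lambda,\mu)$, and then apply Theorem \ref{th:kkkkkk} to obtain a single $t\in\Delta_m$ and hence $(\lambda,\mu)=\sum_i t_i(\lambda^i,\mu^i)$. Your write-up is in fact more explicit than the paper's in justifying that $\Lambda(x^*)$ is a polytope and that the hypothesis \eqref{eq:K_s} follows from $s_1A_1+s_2A_2+s_3A_3\succ 0$, but the logical skeleton is identical.
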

\begin{proof}
According to Lemma \ref{co:soc}, it holds that
\begin{equation}
\forall v\in K:\ \max\{v^T\nabla^2 L(x^*,\lambda,\mu)v: ~ (\lambda,\mu)\in\Lambda(x^*)\}\ge0.\label{eq:vv0}
\end{equation}
Since the set $\Lambda(x^*)$ has $m$ vertices and $v^T\nabla^2 L(x^*,\lambda,\mu)v$ is linear with respect to $(\lambda,\mu)$, \eqref{eq:vv0} is equivalent to
\[
\forall v\in K:\ \max\{v^T\nabla^2 L(x^*,\lambda^i,\mu^i)v:~ i=1,\cdots,m\}\ge0.
\]
According to Theorem \ref{th:kkkkkk}, there exists
	$t\in\Delta_m$ such that 	\eqref{eq:lm} holds with
	$(\lambda,\mu)=\sum_{i=1}^mt_i(\lambda^i,\mu^i)\in \Lambda(x^*)$.\qed
\end{proof}
\begin{remark}
If $C(x^*)$ itself is a regular cone, the standard second-order necessary optimality condition holds at $x^*$.
One sufficient condition to ensure $C(x^*)$ to be a regular cone is the generalized strict complementarity slackness, see  \cite{Baccari04} for more details.
\end{remark}
\begin{remark}
Theorem \ref{th:main} covers the main result in \cite[Theorem 3.2]{haeser17}, which assumes that the rank of $\{{\nabla^2 L(x^*,\lambda^i,\mu^i)}: ~i=1, \cdots, m\}$ is at most $2$. Suppose $\{{\nabla^2 L(x^*,\lambda^i,\mu^i)}: ~i=1, 2, \cdots, m\}$ is generated by two matrices $\{A_1, A_2\}$. Then it can also be generated by $\{A_1, A_2, I\}$. Moreover, letting $s_1=s_2=0, s_3=1$ implies that the positive definite linear combination assumption in Theorem \ref{th:main} holds.

\end{remark}

\section{A further extended S-lemma and its application}
The celebrated S-Lemma is due to \cite{Yakubovich71}, see \cite{polik2007} for a survey. In this section, based on Theorem \ref{th:ex2}, we further extend S-lemma. It covers the homogeneous S-lemma and its several extensions. Then we present its applications in revealing hidden convexity of quadratic optimization.

\begin{theorem}[Homogeneous S-lemma]\label{le:hs}
For $A_0, A_1\in\mathcal{S}^n, \beta_0, \beta_1\in\R$, suppose that there exists $\bar x\in\R^n$ such that $\bar x^TA_1\bar x<\beta_1$. The following two assertions are equivalent:
\begin{itemize}
\item[(a)] $x^TA_1x\le \beta_1~\Longrightarrow~x^TA_0x\ge\beta_0$.
\item[(b)]   $\exists\ t\ge0$ such that $A_0+tA_1\succeq 0,\  \beta_0+t\beta_1\le0.$
\end{itemize}
\end{theorem}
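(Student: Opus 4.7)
The plan is to establish the nontrivial direction $(a) \Rightarrow (b)$ by homogenizing the scalar thresholds, applying Dines's theorem (which is a special case of Theorem~\ref{th:ex2}), and then separating the resulting convex cone from the ``bad'' region. The reverse direction $(b) \Rightarrow (a)$ is immediate: if $A_0 + tA_1 \succeq 0$ and $\beta_0 + t\beta_1 \le 0$, then any $x$ with $x^TA_1x \le \beta_1$ satisfies $x^TA_0x \ge -t\,x^TA_1x \ge -t\beta_1 \ge \beta_0$.

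For $(a) \Rightarrow (b)$, I would embed into $\R^{n+1}$ by setting $\tilde A_i = \mathrm{diag}(A_i,\,-\beta_i) \in \mathcal{S}^{n+1}$, so that $(x;s)^T\tilde A_i(x;s) = x^TA_ix - \beta_is^2$. By Dines's theorem, the joint numerical range
\[
\tilde\Omega = \{((x;s)^T\tilde A_0(x;s),\,(x;s)^T\tilde A_1(x;s)):~(x,s) \in \R^{n+1}\}
\]
is a convex cone, so it suffices to show $\tilde\Omega \cap U = \emptyset$ for $U := \{(a,b):~a<0,~b\le 0\}$ and then invoke hyperplane separation.

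To verify $\tilde\Omega \cap U = \emptyset$, suppose some $(x;s)$ produces a point of $U$. If $s\neq 0$, rescaling to $s=1$ gives $x^TA_1x \le \beta_1$ and $x^TA_0x < \beta_0$, directly contradicting (a). The delicate subcase is $s=0$, in which $x^TA_0x<0$ and $x^TA_1x\le 0$; here the Slater point $\bar x$ is essential. I would perturb along $y(\lambda) = \lambda x + \bar x$, flipping the sign of $x$ if necessary to ensure $x^TA_1\bar x \le 0$ whenever $x^TA_1x = 0$. Expanding $y(\lambda)^TA_i y(\lambda)$ as a quadratic in $\lambda$, either the dominant term $\lambda^2 x^TA_1x<0$ or (when $x^TA_1x=0$) the controlled linear term together with $\bar x^TA_1\bar x < \beta_1$ forces $y(\lambda)^TA_1y(\lambda) < \beta_1$ for all sufficiently large $\lambda$, while $y(\lambda)^TA_0y(\lambda) \to -\infty$ since $x^TA_0x < 0$, so eventually $y(\lambda)^TA_0y(\lambda) < \beta_0$, again contradicting (a).

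With $\tilde\Omega \cap U = \emptyset$ in hand, separating the convex cone $\tilde\Omega$ from the convex set $U$ (which has nonempty interior) yields a nonzero $(t_0,t_1) \ge 0$ with $t_0\bigl((x;s)^T\tilde A_0(x;s)\bigr) + t_1\bigl((x;s)^T\tilde A_1(x;s)\bigr) \ge 0$ for every $(x,s)$. Taking $s=0$ gives $t_0A_0 + t_1A_1 \succeq 0$; taking $x=0$, $s=1$ gives $t_0\beta_0 + t_1\beta_1 \le 0$. The Slater point rules out $t_0 = 0$, for otherwise $A_1 \succeq 0$ together with $\beta_1 \le 0$ would force $\bar x^TA_1\bar x \ge 0 \ge \beta_1$, contradicting $\bar x^TA_1\bar x < \beta_1$. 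Setting $t := t_1/t_0$ then produces (b). The main obstacle is the $s=0$ subcase of the disjointness claim, which is where the Slater assumption is genuinely consumed and cannot be bypassed by purely algebraic manipulation of (a).
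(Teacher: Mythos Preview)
The paper does not supply its own proof of Theorem~\ref{le:hs}; it is quoted as the classical homogeneous S-lemma due to Yakubovich and then generalized in Theorem~\ref{th:g_slemma}. Your argument is correct and complete: the homogenization $\tilde A_i=\mathrm{diag}(A_i,-\beta_i)$, Dines's convexity for the lifted pair, the disjointness from $U$ (including the careful $s=0$ subcase handled via the Slater point and a sign flip), and the separation argument all go through as written.

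It is worth contrasting your route with the one the paper uses for the more general Theorem~\ref{th:g_slemma}. There the authors do \emph{not} homogenize: they work directly with the translated set $\Omega=\{(x^TB_0x-\beta_0,\ldots,x^TB_mx-\beta_m):x\in\R^n\}$, observe that it is convex because the untranslated joint range is convex by Theorem~\ref{th:ex2}, and separate $\Omega$ from $\mathrm{int}\,\R_-^{m+1}$. Both conclusions $B_0+\sum t_iB_i\succeq 0$ and $\beta_0+\sum t_i\beta_i\le 0$ are then read off simultaneously from the single inequality $x^T(\sum t_iB_i)x\ge \sum t_i\beta_i$. This avoids the extra variable and, crucially, avoids the $s=0$ perturbation argument entirely, at the price of needing the (stronger) convexity result for the joint range itself. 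Your homogenization approach is the more classical one for the two-form S-lemma and only needs Dines's theorem; the paper's approach is streamlined for the $m$-form setting because it leverages the full strength of Theorem~\ref{th:ex2}.
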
	
As presented in \cite[Proposition 3.5]{polik2007}, Theorem \ref{le:hs} is extended to $m\ (\ge1)$ quadratic forms generated by at most two quadratic forms. We remark that the original version \cite[Proposition 3.5]{polik2007} missed assuming Slater condition, which is necessary.
\begin{theorem}[Extended S-lemma]\label{le:slemma}
For $m\ge 1$ and $B_0, B_1, \cdots, B_m \in\mathcal{S}^n$, suppose that ${\rm rank} ({B_0, B_1, \cdots, B_m})\le 2.$ Under Slater condition, i.e., there exists $\bar x\in\R^n$ such that $\bar x^TB_i\bar x<0,~ i=1, \cdots, m$, the following two assertions are equivalent:
\begin{itemize}
\item[(c)] $x^TB_ix\le0,\  i=1, 2, \cdots, m~\Longrightarrow~x^TB_0x\ge0           $.	
\item[(d)]   $\exists\ t\ge0$ such that
$B_0+\sum_{i=1}^{m}t_iB_i\succeq 0.$
\end{itemize}
\end{theorem}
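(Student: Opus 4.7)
The direction $(d) \Rightarrow (c)$ is a one-liner that I would dispose of first: multiplying each hypothesis $x^T B_i x \le 0$ by $t_i \ge 0$, summing for $i = 1, \ldots, m$, and invoking $B_0 + \sum t_i B_i \succeq 0$ immediately forces $x^T B_0 x \ge 0$.

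For $(c) \Rightarrow (d)$, I plan to follow the separation strategy already used in the proof of Theorem \ref{th:kkkkkk}. Set
$$\Omega = \{(x^T B_0 x, x^T B_1 x, \ldots, x^T B_m x) : x \in \R^n\} \subset \R^{m+1}.$$
The first step is to show $\Omega$ is a convex cone. Since ${\rm rank} (\{B_0, B_1, \ldots, B_m\}) \le 2$, the set is generated by at most two matrices $A_1, A_2 \in \mathcal{S}^n$; appending the identity produces a three-matrix generating set $\{A_1, A_2, I\}$ with the trivial positive definite combination $0 \cdot A_1 + 0 \cdot A_2 + 1 \cdot I \succ 0$, so Theorem \ref{th:ex2} (combined with the dimension-lifting trick of Remark \ref{re:P} if $n < 3$) delivers convexity of $\Omega$.

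Next I would set up the separation. Condition (c) rewrites as $\Omega \cap V = \emptyset$, where $V = \{y \in \R^{m+1} : y_0 < 0,\ y_i \le 0,\ i = 1, \ldots, m\}$ is a nonempty open convex cone. Applying the convex set separation theorem yields a nonzero $t = (t_0, t_1, \ldots, t_m)$ with $t^T y \ge 0$ on $\Omega$ and $t^T y \le 0$ on $V$; evaluating the latter along the coordinate rays of $V$ forces $t \in \R_+^{m+1}$, while the former gives $t_0 B_0 + \sum_{i=1}^m t_i B_i \succeq 0$.

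The main obstacle, and precisely where the Slater condition is indispensable, is to exclude $t_0 = 0$, since otherwise we would only get $\sum_{i=1}^m t_i B_i \succeq 0$, which is not enough for (d). I would argue by contradiction: if $t_0 = 0$, then evaluating $t_0 x^T B_0 x + \sum_{i=1}^m t_i x^T B_i x \ge 0$ at the Slater point $\bar x$ produces $\sum_{i=1}^m t_i \bar x^T B_i \bar x \ge 0$ with each summand non-positive and each factor $\bar x^T B_i \bar x$ strictly negative, forcing $t_i = 0$ for all $i$ and hence $t = 0$, a contradiction. Once $t_0 > 0$ is secured, dividing by $t_0$ yields the multipliers required in (d).
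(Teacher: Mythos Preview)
Your proof is correct and mirrors exactly the approach the paper takes for its more general Theorem \ref{th:g_slemma} (of which this statement is the $\beta_i=0$ special case, as the paper notes in the Remark following Theorem \ref{th:g_slemma}): convexity of the joint range via Theorem \ref{th:ex2}, separation from the negative orthant, and the Slater point to rule out $t_0=0$. One small wording slip: the set $V=\{y_0<0,\ y_i\le 0,\ i\ge1\}$ is not open, only its interior $\{y_i<0\ \forall i\}$ is; but this is harmless, since $\Omega\cap V=\emptyset$ implies $\Omega\cap\operatorname{int}\R_-^{m+1}=\emptyset$, and separation then proceeds exactly as in the paper.
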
	
We extend Theorems \ref{le:hs} and \ref{le:slemma} to $m\ (\ge1)$ quadratic forms that can be generated by three quadratic forms with a positive definite linear combination.
\begin{theorem} \label{th:g_slemma}
	Suppose that $n\ge 3, m\ge 1$ and $\{B_i: i=0, 1, \cdots, m\}$ can be generated by $\{A_1, A_2, A_3\}$ with $s_1A_1+s_2A_2+s_3A_3\succ0$ for some $s_1, s_2, s_3\in\R$. Under Slater condition, i.e., there exists $\bar x\in\R^n$ such that $\bar x^TB_i\bar x<\beta_i, i=1, \cdots, m$,  the following are equivalent:
\begin{itemize}
\item[(e)] $ x^TB_ix\le\beta_i, ~i=1, \cdots, m~\Longrightarrow~x^TB_0x\ge\beta_0$.	
\item[(f)] $\exists\ 0\le t\in\R^m$ such that
$
		B_0+\sum_{i=1}^m t_iB_i\succeq0,\
		\beta_0+\sum_{i=1}^m t_i\beta_i\le0.
$	
\end{itemize}
\end{theorem}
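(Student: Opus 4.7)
The plan is to follow the classical separation strategy that underlies the Dines-based proofs of S-lemma, but now powered by the joint numerical range convexity supplied by Theorem \ref{th:ex2}. The implication $(f)\Rightarrow(e)$ is a one-line verification: for any $x$ with $x^TB_ix\le\beta_i$, multiplying by $t_i\ge 0$, summing and combining with the two inequalities in $(f)$ yields
\[
x^TB_0x\ge -\sum_{i=1}^m t_ix^TB_ix\ge -\sum_{i=1}^m t_i\beta_i\ge\beta_0.
\]
I therefore focus on the nontrivial direction $(e)\Rightarrow(f)$.

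First I would enlarge the matrix family in Theorem \ref{th:ex2} to include $B_0$: since $\{B_0,B_1,\ldots,B_m\}$ is still generated by $\{A_1,A_2,A_3\}$ and inherits the positive definite linear combination, the joint numerical range
\[
\Omega=\{(x^TB_0x,x^TB_1x,\ldots,x^TB_mx):x\in\R^n\}\subset\R^{m+1}
\]
is a convex cone. Statement $(e)$ is exactly the assertion that $\Omega$ does not meet the open convex set
\[
W=\{w\in\R^{m+1}:w_0<\beta_0,\ w_i<\beta_i,\ i=1,\ldots,m\}.
\]
The convex separation theorem then produces a nonzero $c=(c_0,c_1,\ldots,c_m)\in\R^{m+1}$ and a scalar $\alpha$ with $c^Tw\le\alpha\le c^Ty$ for every $w\in W$ and $y\in\Omega$. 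Since $W$ is unbounded towards $-\infty$ in each coordinate, finiteness of $\sup_{w\in W}c^Tw$ forces $c_i\ge 0$ for every $i$, and pushing $w$ to the boundary of $W$ gives $\sup_{w\in W}c^Tw=c^T\beta$. Because $\Omega$ is a cone containing the origin, the bound $c^Ty\ge\alpha$ in fact improves to $c^Ty\ge 0$ throughout $\Omega$ (otherwise a positive scaling would drive $c^Ty$ to $-\infty$), which is the matrix inequality $c_0B_0+\sum_{i=1}^m c_iB_i\succeq 0$, while $\alpha\le 0$ gives $c^T\beta\le 0$.

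The remaining and most delicate step is to exclude $c_0=0$, so that one may rescale by setting $t_i=c_i/c_0$. This is where the Slater point $\bar x$ enters. If $c_0=0$, then $(c_1,\ldots,c_m)\ne 0$ is nonnegative, $\sum_{i=1}^m c_iB_i\succeq 0$, and $\sum_{i=1}^m c_i\beta_i\le 0$. Evaluating the semidefiniteness at $\bar x$ yields $\sum_{i=1}^m c_i\bar x^TB_i\bar x\ge 0$, whereas the strict Slater inequalities $\bar x^TB_i\bar x<\beta_i$, combined with at least one positive $c_i$, force $\sum_{i=1}^m c_i\bar x^TB_i\bar x<\sum_{i=1}^m c_i\beta_i\le 0$, a contradiction. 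Hence $c_0>0$, and the choice $t_i=c_i/c_0$ delivers $(f)$.

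The main obstacle I anticipate is precisely the interplay between Slater's condition and the structural rank-three hypothesis: Theorem \ref{th:ex2} is what turns the image $\Omega$ into a convex set suitable for separation, while the Slater assumption is indispensable for ruling out the singular multiplier $c_0=0$. Dropping either ingredient breaks the argument, as is already visible from the missing Slater assumption in the original statement of Theorem \ref{le:slemma}.
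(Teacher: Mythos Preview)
Your proof is correct and follows essentially the same strategy as the paper: invoke Theorem~\ref{th:ex2} for convexity of the joint numerical range of $B_0,\ldots,B_m$, separate it from a suitable orthant-type set, read off nonnegativity of the multipliers, and use the Slater point $\bar x$ to rule out the degenerate case $c_0=0$. The only cosmetic difference is that the paper shifts the image by $\beta$ and separates from the nonpositive orthant, whereas you keep the image as a cone and shift the orthant to $W$; these are dual bookkeeping choices and lead to the same multipliers. One wording quibble: ``Statement $(e)$ is exactly the assertion that $\Omega\cap W=\emptyset$'' overstates things---$(e)$ only implies $\Omega\cap W=\emptyset$, not conversely---but since you are proving $(e)\Rightarrow(f)$ this is harmless.
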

\begin{proof}
Proof of $(e) \rightarrow(f)$. Denote
\begin{eqnarray*}
 &&\Omega=\{(x^TB_0x-\beta_0, x^TB_1x-\beta_1, x^TB_2x-\beta_2,\cdots,x^TB_mx-\beta_m):~x\in \R^n\},\\
 &&\Omega_0=\{(y_0, y_1, y_2,\cdots, y_m): ~y_i\le 0,~ i=0, 1,\cdots,m\}.
\end{eqnarray*}
It follows from $(e)$ that ${\rm int}\Omega_0\cap \Omega=\emptyset$. According to Theorem \ref{th:ex2}, $\Omega$ is convex. Then, according to  the well-known convex set separation theorem, there exists $(t_0, t_1, \cdots, t_m)\in\R^{m+1}\setminus\{0\}$ such that
	\begin{eqnarray}
	&&\forall x\in\R^n:\  t_0	(x^TB_0x-\beta_0)+\sum_{i=1}^mt_i (x^TB_ix-\beta_i)\ge0,\label{eq:S1}\\
	&&\forall y \in\Omega_0:\  t_0y_0+\sum_{i=1}^m t_iy_i\le0.\label{eq:S2}
	\end{eqnarray}
By \eqref{eq:S2}, it holds that $t_i\ge0$ for $i=0, 1, \cdots, m$. If $t_0=0$,	taking $x=\bar x$ in \eqref{eq:S1} yields that $t_i=0, ~i=1,\cdots,m$, which contradicts the assumption $t\neq 0$. Thus, we have $t_0>0$. 
Dividing both sides of \eqref{eq:S1} by $t_0$ yields that
\[
\forall x\in\R^n:\ x^TB_0x+\sum_{i=1}^m\frac{t_i}{t_0} x^TB_ix\ge\beta_0+\sum_{i=1}^m\frac{t_i}{t_0}\beta_i,
\]
which completes the proof of $(f)$.

Proof of $(f) \rightarrow(e)$.  Suppose, on the contrary, $(e)$ does not hold. Then there exists $\tilde x\in\R^n$ such that
\begin{equation}\label{f:1}
\tilde x^TB_0\tilde x< \beta_0,\ \tilde x^TB_i\tilde x\le \beta_i, ~ i=1,\cdots,m.
\end{equation}
On the other hand, it follows from $(f)$ that there exists $t\ge0$ such that
\begin{equation}\label{f:2}
\tilde x^TB_0\tilde x+\sum_{i=1}^{m} t_i\tilde x^TB_i\tilde x\ge0\ge\beta_0+\sum_{i=1}^m t_i\beta_i.
\end{equation}
Combining \eqref{f:1} and \eqref{f:2} gives a contradiction.\qed
\end{proof}
\begin{remark}
Based on the similar analysis of Remark \ref{re:P}, Theorem \ref{th:g_slemma}  covers Theorems \ref{le:hs} and \ref{le:slemma}. Moreover, with the setting $m=2$, Theorem \ref{th:g_slemma} reduces to Polyak's extension of S-lemma, see \cite[Theorem 4.1]{Polyak98}.
\end{remark}
We present the following equality version of Theorem \ref{th:g_slemma} and omit the proof as it is similar to that of Theorem \ref{th:g_slemma}.
\begin{theorem} \label{th:g_slemma2}
Suppose that $n\ge 3,~ m\ge 1$ and $\{B_i: i=0, 1, \cdots, m\}$ can be generated by $\{A_1, A_2, A_3\}$ with $s_1A_1+s_2A_2+s_3A_3\succ0$  for some $s_1, s_2, s_3\in\R$. Assume that there exist $\bar x, \bar x'\in\R^n$ such that $\bar x^TB_i\bar x<\beta_i, ~i=1, \cdots, m$ and $\bar x'^TB_1\bar x'>\beta_1,\   \bar x'^TB_i\bar x'<\beta_i, ~i=2, \cdots, m$. Then, the following are equivalent:
\begin{itemize}
\item[(g)] $x^TB_1x=\beta_1,\  x^TB_ix\le\beta_i, i=2, \cdots, m~\Longrightarrow~ x^TB_0x\ge\beta_0$.
\item[(h)] $\exists\ t_1\in\R,\ t_2, \cdots, t_m\ge0$ such that
$
		B_0+\sum_{i=1}^m t_iB_i\succeq0,\
		\beta_0+\sum_{i=1}^m t_i\beta_i\le0.
$
\end{itemize}
\end{theorem}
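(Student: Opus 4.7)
The plan is to mirror the proof of Theorem \ref{th:g_slemma} step by step, with two modifications: the ``obstruction set'' $\Omega_0$ must encode the equality constraint on $B_1$, and the elimination of the degenerate case of the separating multiplier now requires using both Slater-type points $\bar x$ and $\bar x'$ simultaneously rather than just one.

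For the nontrivial direction $(g)\rightarrow(h)$, I would introduce
\begin{eqnarray*}
\Omega &=& \{(x^TB_0x-\beta_0,\ x^TB_1x-\beta_1,\ \ldots,\ x^TB_mx-\beta_m):~ x\in\R^n\},\\
\Omega_0 &=& \{y\in\R^{m+1}:~ y_0\le0,\ y_1=0,\ y_i\le 0,\ i=2,\ldots,m\}.
\end{eqnarray*}
By Theorem \ref{th:ex2}, $\Omega$ is convex, and condition $(g)$ expresses exactly that $\Omega$ does not meet the relatively open subset $\{y_0<0,\ y_1=0,\ y_i<0\ \text{for}\ i\ge 2\}$ of $\Omega_0$. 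Proper convex separation then yields $(t_0,t_1,\ldots,t_m)\neq 0$ with $\sum_{i=0}^m t_iy_i\ge 0$ on $\Omega$ and $\sum_{i=0}^m t_iy_i\le 0$ on $\Omega_0$; the shape of $\Omega_0$ forces $t_0\ge 0$ and $t_i\ge 0$ for $i\ge 2$, while $t_1$ has no sign restriction, which is precisely the sign pattern demanded by $(h)$.

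The main obstacle is ruling out $t_0=0$. In the proof of Theorem \ref{th:g_slemma}, a single Slater point sufficed; here, under $t_0=0$, evaluating the inequality $\sum_{i=1}^m t_i(x^TB_ix-\beta_i)\ge 0$ at $x=\bar x$ only yields $t_1\le 0$ (since all bracketed quantities are strictly negative and $t_i\ge 0$ for $i\ge 2$), so I would then evaluate at $\bar x'$, where $\bar x'^TB_1\bar x'-\beta_1>0$ while the other brackets remain negative, to obtain $t_1\ge 0$; hence $t_1=0$, and re-using the $\bar x$ inequality then forces $t_i=0$ for $i\ge 2$, contradicting $(t_0,\ldots,t_m)\neq 0$. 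Once $t_0>0$ is established, dividing the separating inequality by $t_0$ and setting $x=0$ produces the scalar inequality $\beta_0+\sum_{i=1}^m (t_i/t_0)\beta_i\le 0$, while replacing $x$ by $\lambda x$ and letting $\lambda\to\infty$ forces the matrix inequality $B_0+\sum_{i=1}^m (t_i/t_0)B_i\succeq 0$, completing $(h)$. The reverse implication $(h)\rightarrow(g)$ is a direct computation: for any $x$ with $x^TB_1x=\beta_1$ and $x^TB_ix\le\beta_i$ for $i\ge 2$, the signs of the $t_i$ give $\sum_{i=1}^m t_ix^TB_ix\le\sum_{i=1}^m t_i\beta_i$, and combining with $(h)$ yields $x^TB_0x\ge -\sum_{i=1}^m t_ix^TB_ix\ge -\sum_{i=1}^m t_i\beta_i\ge\beta_0$.
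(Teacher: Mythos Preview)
Your proposal is correct and matches exactly what the paper intends: the paper explicitly omits the proof of Theorem \ref{th:g_slemma2}, stating only that it ``is similar to that of Theorem \ref{th:g_slemma},'' and your write-up supplies precisely those similar details. In particular, the two places where an adaptation is needed---replacing $\operatorname{int}\Omega_0$ by $\operatorname{ri}\Omega_0$ because $\Omega_0$ now lies in the hyperplane $\{y_1=0\}$, and using the second Slater point $\bar x'$ alongside $\bar x$ to pin down $t_1=0$ in the degenerate case $t_0=0$---are handled correctly and are the natural modifications implicit in the paper's remark.
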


S-lemma and its variants play a great role in revealing the hidden convexity of quadratic optimization problems, see \cite{Xia20} and references therein.
As applications of Theorems \ref{th:g_slemma} and \ref{th:g_slemma2}, we study the homogeneous quadratic optimization in $\R^n~(n\ge3)$ with two bilateral quadratic form constraints:
\begin{equation}\label{eq:Aqq2_1}\tag{HQPB}
	\begin{array}{cl}
		\min & x^TA_0x\\
		\ST   &
		m_1\le x^TA_1x\le M_1,\\
		& m_2\le x^TA_2x\le M_2,\\
	\end{array}
\end{equation}
where $A_i\in \mathcal{S}^{n}$, $i=0,1,2$, and  either there exist $s_0, s_1, s_2\in\R$ such that  $s_0A_0+s_1A_1+s_2A_2\succ0$ or ${\rm rank\ }(\{A_0, A_1, A_2\})<3$.
We assume $m_1<M_1$ and Slater condition holds, i.e., if  $m_2<M_2$, there exists $\widetilde{x}\in\R^n$ such that
\[
m_1< \widetilde{x}^TA_1\widetilde{x}< M_1,\  m_2< \widetilde{x}^TA_2\widetilde{x}< M_2,
\]
and otherwise  $m_2=M_2$, then
there exists $\bar x, \bar x'\in\R^n$ such that
\[
m_1< \bar x^TA_1\bar x< M_1,\ \bar x^TA_2\bar x< m_2;
\ m_1< \bar x'^{T}A_1\bar x'< M_1,\ \bar x'^{T}A_2\bar x'> m_2.
\]

Problem \eqref{eq:Aqq2_1} contains the following two special cases.
\begin{example}
Consider the generalized trust region subproblem with interval bounds \cite{wang15,Pong14}
\begin{equation}\label{eq:bGTRS}
	\min\{f_0(x):~m\le f_1(x)\le M\},
\end{equation}
where $f_i(x)=x^TA_ix+2a_i^Tx$, $i=0, 1$, $A_i\in\mathcal{S}^n$, $a_i\in\R^n$,  $i=0, 1$, $n\ge 2$ and $m<M$. We assume that there exists $s_0, s_1\in\R$ such that $s_0A_0+s_1A_1\succ0$ and there exists $\bar x\in\R^n$ such that $m< f_1(\bar x)< M$. This problem generalizes from the case $f_1(x)=\|x\|^2$ which dates back to Stern and Wolkowicz \cite{Stern95}.

Problem \eqref{eq:bGTRS} can be homogenized as the following problem with respect to $(x,t)\in\R^{n+1}$:
\begin{equation}\label{eq:HbGTRS}
	\begin{array}{cl}
		\min & x^TA_0x+2a_0^Txt\\
		\ST   &
		m\le x^TA_1x+2a_1^Txt\le M,\\
		&t^2=1.
	\end{array}
\end{equation}
It is not difficult to verify that the optimal values of problems \eqref{eq:bGTRS} and \eqref{eq:HbGTRS} are equal, and $t^*x^*$ globally solves problem \eqref{eq:bGTRS}  if and only if $(x^*, t^*)$ is a global minimizer of problem \eqref{eq:HbGTRS}.
One can also verify that problem \eqref{eq:HbGTRS} satisfies all the assumptions we made in \eqref{eq:Aqq2_1}.
\end{example}

\begin{example}
Consider the two-sided identital regularized total least squares problem \cite{Beck06}
\begin{equation}\label{eq:TRTLS}\tag{TRTLS}
\min \left\{\frac{{\|Ax-b\|}^2}{{\|x\|}^2+1}:~
 m\le \|x\|^2\le M \right\},
\end{equation}
where $A\in\mathcal{S}^n, ~b\in\R^n,  ~m<M, ~n\ge 2$, and $\|\cdot\|$ is the Euclidean norm. The standard Lagrangian duality may admit  a positive gap \eqref{eq:TRTLS}. Necessary and sufficient condition for the strong Lagrangian duality for \eqref{eq:TRTLS} is presented in  \cite{Yang20}.
By introducing the generalized Charnes-Cooper transformation
\[
y=\frac{x}{\sqrt{x^Tx+1}},\ z=\frac{1}{\sqrt{x^Tx+1}},
\]
\eqref{eq:TRTLS} is reformulated as
\[
	\begin{array}{cl}
		\min & y^TA^TAy-2b^TAyz+b^Tbz^2\\
		\ST   &
		mz^2\le y^Ty\le Mz^2 ~( \Longleftrightarrow~
\frac{1}{M+1}\le z^2\le \frac{1}{m+1}),\\
		&y^Ty+z^2=1,
	\end{array}
\]
which is a special case of  \eqref{eq:Aqq2_1} satisfying all assumptions we have made.
%
\end{example}


Suppose $m_2<M_2$.
Problem \eqref{eq:Aqq2_1} is equivalent to
\begin{eqnarray}
&\sup\{&t :~\{x\in\R^n:~x^TA_0x<t,-x^TA_1x \le -m_1, x^TA_1x\le M_1, \nonumber\\
&& ~~~~~~~~~~~~~~~~~~-x^TA_2x\le-\ m_2, x^TA_2x\le M_2\}=\emptyset\}\nonumber\\
=&\sup\{&t: ~(\lambda_1,\mu_1,\lambda_2, \mu_2)\ge0,~A_0+(\mu_1-\lambda_1)A_1+(\mu_2-\lambda_2)A_2\succeq0, \nonumber\\
&&~~~~~t-\lambda_1m_1+\mu_1M_1-\lambda_2m_2+\mu_2M_2\le0\} \nonumber\\
=&\sup\{&\lambda_1m_1-\mu_1M_1+\lambda_2m_2-\mu_2M_2: ~(\lambda_1,\mu_1,\lambda_2, \mu_2)\ge0, \nonumber\\ &&A_0+(\mu_1-\lambda_1)A_1+(\mu_2-\lambda_2)A_2\succeq0
	\},\label{eq:ddd}
\end{eqnarray}
where the first equation follows from Theorem \ref{th:g_slemma}.  One can verify that \eqref{eq:ddd} is the Lagrange dual problem of \eqref{eq:Aqq2_1}.  The other case $m_2=M_2$ is similarly analyzed based on
Theorem \ref{th:g_slemma2}.
Thus, we have proved the following result.
\begin{theorem}
Strong duality holds for \eqref{eq:Aqq2_1} under suitable assumptions.
\end{theorem}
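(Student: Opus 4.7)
The plan is to cast \eqref{eq:Aqq2_1} in its threshold form and then feed it into the extended S-lemma. Specifically, I would write
\[
v(\text{HQPB}) = \sup\{t \in \R :\ x^T A_0 x < t,\ m_1 \le x^T A_1 x \le M_1,\ m_2 \le x^T A_2 x \le M_2\ \text{has no solution in } x\},
\]
so that a dual representation emerges as soon as I rewrite the infeasibility condition in the form ``$x^T B_i x \le \beta_i$ for $i = 1, \dots, 4$ implies $x^T B_0 x \ge \beta_0$.''

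For the generic case $m_2 < M_2$, I set $B_0 = A_0$, $\beta_0 = t$, and take $\{B_1, B_2, B_3, B_4\} = \{-A_1, A_1, -A_2, A_2\}$ with matching right-hand sides $-m_1, M_1, -m_2, M_2$. The family $\{B_0, \ldots, B_4\}$ is generated by $\{A_0, A_1, A_2\}$, so the generation hypothesis of Theorem \ref{th:g_slemma} holds. The positive definite combination assumption carries over from $\{A_0, A_1, A_2\}$; in the rank-deficient subcase ${\rm rank}(\{A_0, A_1, A_2\}) < 3$ I would append the identity to the generating triple and pick coefficients $(0,0,1)$, in the spirit of Remark \ref{re:P}. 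Slater is the existence of $\widetilde{x}$ assumed in the problem. Theorem \ref{th:g_slemma} then produces nonnegative multipliers $(\lambda_1, \mu_1, \lambda_2, \mu_2)$ with
\[
A_0 + (\mu_1 - \lambda_1) A_1 + (\mu_2 - \lambda_2) A_2 \succeq 0, \qquad t - \lambda_1 m_1 + \mu_1 M_1 - \lambda_2 m_2 + \mu_2 M_2 \le 0,
\]
and taking the supremum over admissible $t$ yields the SDP
\[
\sup\{\lambda_1 m_1 - \mu_1 M_1 + \lambda_2 m_2 - \mu_2 M_2 :\ (\lambda, \mu) \ge 0,\ A_0 + (\mu_1 - \lambda_1) A_1 + (\mu_2 - \lambda_2) A_2 \succeq 0\},
\]
which I would identify as the Lagrange dual of \eqref{eq:Aqq2_1} by directly expanding the Lagrangian. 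This establishes equality of primal and dual values, i.e. strong duality.

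The equality case $m_2 = M_2$ is handled along the same lines, using Theorem \ref{th:g_slemma2} in place of Theorem \ref{th:g_slemma}; the two Slater vectors $\bar x, \bar x'$ assumed in the problem statement are precisely what the hypothesis of Theorem \ref{th:g_slemma2} demands. I expect the only genuine obstacle to be the bookkeeping needed to transfer the generation-plus-positive-definite-combination hypothesis cleanly from the original triple $\{A_0, A_1, A_2\}$ to the enlarged family $\{B_0, \ldots, B_4\}$ after sign flips, plus the small workaround via the identity in the rank-deficient subcase. Recognizing the resulting SDP as the Lagrange dual is then a direct calculation.
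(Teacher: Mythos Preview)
Your proposal is correct and follows essentially the same approach as the paper: rewrite $v(\text{HQPB})$ as a supremum over thresholds $t$, translate the resulting infeasibility statement into the implication form required by Theorem~\ref{th:g_slemma} (or Theorem~\ref{th:g_slemma2} when $m_2=M_2$), and recognize the outcome as the Lagrange dual. Your explicit handling of the rank-deficient subcase via appending the identity is slightly more detailed than the paper, which leaves this implicit in the standing assumptions, but the argument is otherwise identical.
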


\section{Conclusion}
We extend  Polyak's  theorem on the convexity of the joint numerical range from  three to  arbitrary number of quadratic forms, provided that their Hessian matrices can be generated by three matrices with a  positive definite linear combination. It also covers the classical Dines's theorem. As applications, we further extend Yuan's lemma and S-lemma to the system with more than three quadratic forms.  With the help of our extended Yuan's lemma, we establish a more general assumption under which the standard second-order necessary condition holds at local minimizer. It is unknown whether our assumption can be further relaxed. The further extended S-lemma helps to reveal strong duality of homogeneous quadratic optimization problem with two bilateral constraints, which includes the generalized trust region subproblem with interval bounds and the two-sided regularized total least squares problem as special cases. Future works include more applications of our new results and further extensions to nonhomogeneous alternative theorems.

\bibliographystyle{spmpsci_unsrt}
\bibliography{references}

%
%


%
%

\end{document}